\newcommand{\dd}{\mathrm{d}}
\newcommand{\E}{\mathbb{E}}
\newcommand{\1}{\textbf{1}}
\newcommand{\R}{\mathbb{R}}
\newcommand{\scal}[2]{\!\left\langle #1, #2 \right\rangle\!}
\renewcommand{\leq}{\leqslant}
\renewcommand{\geq}{\geqslant}
\newcommand{\ls}{\leqslant}
\newcommand{\gr}{\geqslant}
\DeclareMathOperator{\Vol}{Vol}
\newtheorem{theorem}{Theorem}
\newtheorem{lemma}[theorem]{Lemma}
\newtheorem{corollary}[theorem]{Corollary}
\theoremstyle{remark}
\newtheorem{remark}[theorem]{Remark}
\theoremstyle{definition}
\title{\vspace{-3em}From Ball's cube slicing inequality to Khinchin-type inequalities for negative moments}
\author{
Giorgos Chasapis\thanks{\linespread{1.0} Carnegie Mellon University; Pittsburgh, PA 15213, USA. Email: gchasapi@andrew.cmu.edu.}, \
Hermann K\"onig\thanks{Mathematisches Seminar,\,Universit\"at Kiel,\,24098 Kiel,\,Germany.\,E-mail:\,hkoenig@math.uni-kiel.de.}  
\ and
Tomasz Tkocz\thanks{\linespread{1.0} Carnegie Mellon University; Pittsburgh, PA 15213, USA. Email: ttkocz@math.cmu.edu. Research supported in part by NSF grant DMS-1955175} \\
\date{24th November 2020}
}
\begin{document}

\maketitle

\begin{abstract}
We establish a sharp moment comparison inequality between an arbitrary negative moment and the second moment for sums of independent uniform random variables, which extends Ball's cube slicing inequality.
\end{abstract}

\bigskip

\begin{footnotesize}
\noindent {\em 2010 Mathematics Subject Classification.} Primary 60E15; Secondary 26D15.

\noindent {\em Key words.} cube slicing, Khinchin inequality, sharp moment comparison, sums of independent random variables, uniform random variables, uniform spherically symmetric random variables.
\end{footnotesize}

\bigskip

\section{Introduction}

Ball's celebrated cube slicing inequality established in \cite{Ball} states that the maximal volume cross-section of the centred cube $[-1, 1]^n$ in $\R^n$ by a hyperplane (a subspace of codimension $1$) equals $2^{n-1}\sqrt{2}$, attained by the hyperplane with normal vector $(\frac{1}{\sqrt{2}}, \frac{1}{\sqrt{2}}, 0, \dots, 0)$ (see also \cite{Ball2}). Khinchin-type inequalities provide moment comparison, typically for weighted sums of independent identically distributed (i.i.d.) random variables. The classical one concerns symmetric random signs and goes back to the work \cite{Kh} of Khinchin. Such inequalities are instrumental in studying unconditional convergence and are used extensively in (functional) analysis and geometry, particularly in (local) theory of Banach spaces. We refer to several works \cite{BC, Haa, K, KK, KLO, LO-best, NO, NP, Sa, Sz} for further background and references (particularly, \cite{BC} provides a detailed historic account on Khinchin inequalities with sharp constants).

The main motivation for this article and its starting point is a fact well-known to experts that Ball's inequality can be viewed as a Khinchin-type inequality (the dual question of extremal volume hyperplane-projections of convex bodies is also linked to Khinchin-type inequalities, see for example \cite{Ball3, BN, ENT1}). An elementary derivation can be sketched as follows. For a unit vector $a = (a_1, \dots, a_n)$ in $\R^n$, let $f$ be the density of $X = \sum_{k=1}^n a_kU_k$, where $U_1, \dots, U_n$ are i.i.d. uniform on $[-1,1]$. Then the $(n-1)$-volume of the cross-section of the cube $[-1,1]^n$ by the hyperplane $a^\perp$ perpendicular to $a$ is $\Vol_{n-1}\big([-1,1]^n \cap a^\perp\big) = 2^nf(0)$. On the other hand, for every symmetric unimodal bounded random variable $X$ with density $f$, we have
\[
f(0) = \|f\|_\infty = \lim_{p \to 1-} \frac{1-p}{2}\E|X|^{-p}
\]
($X$ is called symmetric if it has the same distribution as $-X$).
Thus Ball's inequality, put probabilistically, says that for every unit vector $a$ in $\R^n$, we have
\[
\lim_{p \to 1-} (1-p)\E\left|\sum_{k=1}^n a_kU_k\right|^{-p} \leq \sqrt{2}.
\]
Our main result shows in particular that not only does this inequality hold in the limit, but also for every $p \in (p_0,1)$, where $p_0 = 0.793...\,$. To view this inequality as actual moment comparison, let $\xi_1, \xi_2, \dots$ be i.i.d. random vectors in $\R^3$ uniform on the centered Euclidean unit sphere $S^2$. As a result of Archimedes' hat-box theorem and rotational invariance, the left hand side can be rewritten as $\E\left\|\sum_{k=1}^n a_k\xi_k\right\|^{-1}$, where $\|\cdot\|$ stands for the standard Euclidean norm on $\R^3$ (see Lemma \ref{lm:konig-identity} below). We thus have the following identity for a unit vector $a$ in $\R^n$,
\begin{equation}\label{eq:vol-identity}
2^{1-n}\Vol_{n-1}\big([-1,1]^n \cap a^\perp\big) =  \lim_{p \to 1-} (1-p)\E\left|\sum_{k=1}^n a_kU_k\right|^{-p} = \E\left\|\sum_{k=1}^n a_k\xi_k\right\|^{-1}.
\end{equation}
For a generalisation, see Proposition 3.2 in \cite{KR}.
As a result, we can rephrase Ball's inequality as the following sharp $L_{-1}-L_2$ Khinchin-type inequality: for every $n$ and every reals $a_1, \dots, a_n$,
\begin{equation}\label{eq:L-1-1}
\E\left\|\sum_{k=1}^n a_k\xi_k\right\|^{-1} \leq \sqrt{2}\left(\sum_{k=1}^n a_k^2\right)^{-1/2}.
\end{equation}
We extend this to a sharp $L_{-p}-L_2$ moment comparison for $p \in (0,1)$ with arbitrary matrix-valued coefficients (Corollary \ref{cor:khin-sph} below). We refer to \cite{BC, K, KK, LO} for sharp results for positive moments.

We describe our results in the next section and then present our proofs, preceded with a short overview of them. We conclude with a summary highlighting possible future work.
Throughout, $\scal{x}{y} = \sum_{j=1}^d x_jy_j$ denotes the standard scalar product on $\R^d$, $\|x\| = \sqrt{\scal{x}{x}}$ is the Euclidean norm whose unit sphere and closed unit ball are denoted by $S^{d-1}$ and $B_2^d$, respectively. Moreover, $e_j$ is the $j$-th vector of the standard basis whose $j$-th coordinate is $1$ and the rest are $0$.

\section{Results}

Let $U_1, U_2$ be i.i.d. random variables uniform on $[-1,1]$ and let $Z$ be a standard Gaussian random variable (mean $0$, variance $1$). For $p \in (0,1)$, we define the constants
\begin{equation}\label{eq:consts}
\begin{split}
c_2(p) &= \E\left|\frac{U_1+U_2}{\sqrt{2}}\right|^{-p} = 2^{p/2}\int_{-2}^2 |x|^{-p}\left(\frac{1}{2}-\frac{|x|}{4}\right)\dd x = \frac{2^{1-p/2}}{(1-p)(2-p)}, \\
c_\infty(p) &= \E\left|\frac{Z}{\sqrt{3}}\right|^{-p} = \frac{3^{p/2}}{\sqrt{2\pi}}\int_{-\infty}^\infty |x|^{-p}e^{-x^2/2}\dd x= \frac{(3/2)^{p/2}}{\sqrt{\pi}}\Gamma\left(\frac{1-p}{2}\right)
\end{split}
\end{equation}
and
\begin{equation}\label{eq:Cp}
C_p = \max\{c_2(p),c_\infty(p)\}.
\end{equation}
By comparing $c_2(p)$ and $c_\infty(p)$ as done in Lemma \ref{lm:c2-cinf} from Section \ref{sec:tech-lm} below, in fact we have
\[
C_p = \begin{cases} c_\infty(p), & \text{if } p \in (0,p_0), \\ c_2(p), & \text{if } p \in (p_0,1),\end{cases}
\]
where $p_0$ is the unique $p\in(0,1)$ such that $c_2(p)=c_\infty(p)$. Our main result is the following $L_{-p}-L_2$ Khinchin-type inequality for sums of symmetric uniform random variables.

\begin{theorem}\label{thm:khin-unif}
Let $p \in (0,1)$ and let $C_p$ be defined by \eqref{eq:Cp}. Let $U_1, U_2, \ldots$ be i.i.d. random variables uniform on $[-1,1]$. For every $n$ and every reals $a_1, \dots, a_n$, we have
\begin{equation}\label{eq:khin-unif}
\E\left|\sum_{k=1}^n a_kU_k\right|^{-p} \leq C_p\left(\sum_{k=1}^n a_k^2\right)^{-p/2}.
\end{equation}
\end{theorem}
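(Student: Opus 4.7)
My plan is to adapt Ball's Fourier-analytic proof of the cube slicing inequality to negative fractional moments. Normalising $\sum_k a_k^2 = 1$ (WLOG $a_k > 0$), the target becomes $\E|X|^{-p} \leq C_p$ for $X = \sum_k a_k U_k$.

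The starting point is the Mellin identity $|x|^{-p} = (\Gamma(p)\cos(\pi p/2))^{-1}\int_0^\infty t^{p-1}\cos(tx)\,dt$ for $p \in (0,1)$, which combined with the characteristic function $\E e^{itU_k} = \sin(t)/t$ gives
$$\E|X|^{-p} = \frac{1}{\Gamma(p)\cos(\pi p/2)}\int_0^\infty t^{p-1}\prod_{k=1}^n \frac{\sin(a_k t)}{a_k t}\,dt.$$
Applying Hölder's inequality with exponents $1/a_k^2$ (admissible since $(a_k^2)$ is a probability vector), after distributing $t^{p-1} = \prod_k t^{(p-1)a_k^2}$ and rescaling $u = a_k t$ in each factor, yields
$$\E|X|^{-p} \leq \frac{1}{\Gamma(p)\cos(\pi p/2)}\prod_{k=1}^n \bigl(a_k^{-p}\,J_p(1/a_k^2)\bigr)^{a_k^2}, \quad J_p(r) := \int_0^\infty u^{p-1}\left|\tfrac{\sin u}{u}\right|^r du.$$
Writing $r_k := 1/a_k^2$ so that $\sum 1/r_k = 1$, and taking logarithms, the bound becomes $(\Gamma(p)\cos(\pi p/2))^{-1}\exp\bigl(\sum_k F(r_k)/r_k\bigr)$ with $F(r) := \tfrac{p}{2}\log r + \log J_p(r)$, and this weighted average of $F$-values is controlled by $\sup_k F(r_k)$.

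A direct Mellin computation using $\sin^2 = (1-\cos 2u)/2$ shows $e^{F(2)} = c_2(p)\,\Gamma(p)\cos(\pi p/2)$, while the pointwise limit $(\sin u/u)^r \to e^{-ru^2/6}$ together with the Legendre duplication formula gives $e^{F(\infty)} = c_\infty(p)\,\Gamma(p)\cos(\pi p/2)$. The theorem then reduces to the single-variable ``Ball lemma''
$$\sup_{r \in [2,\infty]} F(r) = \max\{F(2),\,F(\infty)\}.$$
I would attack this by analysing $F'(r) = p/(2r) + J_p'(r)/J_p(r)$, either establishing unimodality/convexity of $F$ on $[2,\infty]$, or by producing an explicit upper envelope interpolating the two endpoint values; the crossover of the extremiser at $p_0$ corresponds to whether this envelope is monotone or peaks at $r=2$.

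The main obstacle is the ``big-coefficient'' regime where some $a_k^2 > 1/2$ (necessarily only one, since $\sum a_j^2 = 1$): then the corresponding Hölder exponent $1/a_k^2$ drops below $2$ and the above scheme breaks, as $J_p$ for $r < 2$ is no longer controlled by the endpoint maximum. I would handle this by conditioning on $Y := \sum_{k \geq 2} a_k U_k$ and using the explicit uniform conditional density of $a_1 U_1 + Y$ on $[Y - a_1, Y + a_1]$ to express $\E[|X|^{-p} \mid Y]$ as an elementary function of $Y$, then bounding $\E_Y[\cdots]$ by exploiting that the remaining coefficients now satisfy $\sum_{k \geq 2} a_k^2 < 1/2$ (e.g.\ by comparing $Y$ with a Gaussian surrogate via the already-established Ball inequality, or by an inductive argument on the number of summands). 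This endpoint regime is traditionally the most delicate part of such Khinchin-type inequalities, and I expect the paper's technical lemmas to be concentrated here.
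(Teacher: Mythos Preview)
Your overall architecture matches the paper's exactly: Fourier/Mellin identity plus H\"older with exponents $1/a_k^2$ for the ``small-weight'' regime $\max_k a_k^2 \leq 1/2$, reducing to the single-variable inequality $\sup_{r \geq 2} F(r) = \max\{F(2),F(\infty)\}$ (the paper's $\Psi_p$ is precisely your $e^{F}$, and your identification of the two endpoint values with $c_2(p)$, $c_\infty(p)$ is correct), followed by a separate treatment of the large-weight case.

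Two substantial pieces are missing. For the integral inequality you only sketch intentions (analyse $F'$, or build an envelope). The paper does not proceed this way: it invokes the Nazarov--Podkorytov distribution-function lemma, comparing $|\sin t/t|$ with a Gaussian $e^{-\sigma_p^2 t^2}$ (with $\sigma_p$ chosen so that equality holds at $s=2$, respectively $\sigma_p^2=1/6$ for the $c_\infty$ regime) and showing that the difference of their $t^{p-1}\,dt$-distribution functions changes sign exactly once. Direct monotonicity/convexity of $F$ is not established and seems hard; the distribution-function technique is the known route.

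The more serious gap is the large-weight case. Conditioning on $Y=\sum_{k\geq2}a_kU_k$ and writing $\E[|a_1U_1+Y|^{-p}\mid Y]$ explicitly is fine, but neither of your proposed closures yields the sharp constant. The paper explicitly notes (Remark~\ref{rem:proj}) that the projection/conditioning bound gives only $2^{p/2}$ instead of $(1-p)C_p$ for $p<1$, and that Schur-type convexity is unavailable because $t\mapsto\E|U_1+\sqrt{t}U_2|^{-p}$ changes convexity at $t=1$. A naive induction also fails: at the borderline $a_1=a_2=1/\sqrt{2}$ the inequality is an \emph{equality}, so there is no slack to absorb a lossy step. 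The paper's device is to (i) pass to $S^2$-uniform vectors $\xi_k$, where the rotational identity $(\xi_{n-1},\xi_n)\overset{d}{=}(\xi_{n-1},Q\xi_{n-1})$ for random orthogonal $Q$ enables an averaging-based inductive step, and (ii) replace $\phi_p(x)=(1+x)^{-p/2}$ on the right-hand side by a concave modification $\Phi_p$ that agrees with $\phi_p$ for $x\geq1$ but lies strictly below it for $x<1$, with the graph on $[0,1]$ obtained by reflecting the graph on $[1,2]$ about $(1,\phi_p(1))$. The induction (Theorem~\ref{thm:one-large}) is run on this \emph{strengthened} inequality; the base case $n=2$ (Lemma~\ref{lm:base}) and the midpoint-concavity of $\Phi_p$ on pairs with mean $\leq1$ (Lemma~\ref{lm:Phi-concave}) are exactly calibrated so that the argument closes with equality at $x=1$. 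Without this strengthening your inductive scheme will not close.
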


\begin{remark}
Applying \eqref{eq:khin-unif} to $n=2$, $a_1 = a_2 = \frac{1}{\sqrt{2}}$ and to $n$ large, $a_1 = \dots = a_n = \frac{1}{\sqrt{n}}$ (with the aid of the central limit theorem) shows that the value of $C_p$ in \eqref{eq:khin-unif} is sharp.
\end{remark}

Moments of a Euclidean norm of weighted sums of independent random vectors uniform on $S^{d+1}$ and $B_2^{d}$, $d \geq 1$, are proportional (see Proposition 4 in \cite{KK} or its generalisation, Theorem 4 in \cite{BC}). We recall a special case of this result relevant for us and for convenience sketch its proof (particularly because the proofs available in the literature treat the case of positive moments, but of course they repeat verbatim to negative moments).

\begin{lemma}[Proposition 4, \cite{KK}]\label{lm:konig-identity}
Let $\xi_1, \xi_2, \ldots$ be i.i.d. random vectors uniformly distributed on the unit sphere $S^{2}$ in $\R^{3}$. Let $U_1, U_2, \ldots$ be i.i.d. random variables uniform on $[-1,1]$. For a vector $a = (a_1,\ldots,a_n)$ in $\R^n$ and $p \in (-\infty,1)$, we have
\[
\E\left\|\sum_{k=1}^n a_k\xi_k\right\|^{-p} = (1-p)\E\left|\sum_{k=1}^n a_kU_k\right|^{-p}.
\]
\end{lemma}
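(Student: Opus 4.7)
The plan is to exploit the Archimedes hat-box theorem, which says that for $\xi$ uniform on $S^{2}\subset\R^{3}$, the marginal $\scal{\xi}{e_1}$ is uniform on $[-1,1]$; combined with rotational invariance, this gives the key identity $\scal{x}{\xi}\stackrel{d}{=}\|x\|\,U$ for every fixed $x\in\R^{3}$ (where $U$ is uniform on $[-1,1]$). Computing a negative moment of a single uniform, $\E|U|^{-p}=\int_{0}^{1}u^{-p}\dd u=\frac{1}{1-p}$ for $p<1$, one obtains
\[
\|x\|^{-p}=(1-p)\,\E|\scal{x}{\xi}|^{-p},\qquad x\in\R^{3}.
\]

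The idea now is to apply this with the random vector $x=S_n:=\sum_{k=1}^{n}a_k\xi_k$ and an auxiliary independent vector $\xi_0$ uniform on $S^{2}$. First I would condition on $S_n$: by the identity above (with the randomness being in $\xi_0$),
\[
\E\bigl[|\scal{S_n}{\xi_0}|^{-p}\bigm|S_n\bigr]=\frac{\|S_n\|^{-p}}{1-p}.
\]
Taking expectations and multiplying by $1-p$ yields $\E\|S_n\|^{-p}=(1-p)\,\E|\scal{S_n}{\xi_0}|^{-p}$. Second, I would condition the other way, on $\xi_0$: by rotational invariance of the distribution of each $\xi_k$ (and their mutual independence and independence from $\xi_0$), the inner products $\scal{\xi_k}{\xi_0}$, $k=1,\ldots,n$, are i.i.d.\ with the same distribution as $\scal{\xi_1}{e_1}$, which by Archimedes' theorem is uniform on $[-1,1]$. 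Hence
\[
\scal{S_n}{\xi_0}=\sum_{k=1}^{n}a_k\scal{\xi_k}{\xi_0}\stackrel{d}{=}\sum_{k=1}^{n}a_kU_k,
\]
and combining the two displays gives the lemma.

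There is no real obstacle here; the only thing to keep in mind is Fubini, which is legitimate since the integrands are nonnegative (the equality is between two quantities in $[0,+\infty]$, both finite simultaneously once at least two of the $a_k$ are nonzero, and trivial otherwise). The trick worth highlighting is the introduction of the auxiliary independent $\xi_0$ so that the same uniform-on-$[-1,1]$ distribution plays two roles: on one side it converts $\|S_n\|^{-p}$ into a negative moment of a linear functional of $S_n$, and on the other, by rotating $\xi_0$ to a coordinate axis, it turns the linear functional into a weighted sum of independent uniforms. This is the standard ``double conditioning'' device also used to prove the more general Proposition~4 of \cite{KK}.
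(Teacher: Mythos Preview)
Your proposal is correct and follows essentially the same argument as the paper's proof (attributed there to Lata\l a): introduce an auxiliary independent uniform vector on $S^{2}$ (your $\xi_0$, the paper's $\theta$), use the identity $\|x\|^{-p}=(1-p)\,\E|\scal{x}{\xi_0}|^{-p}$ coming from Archimedes' theorem, and then swap the order of expectation via Fubini and rotational invariance to reduce to a sum of i.i.d.\ uniforms. The only cosmetic difference is that the paper invokes rotational invariance of the whole sum $\sum a_k\xi_k$ to rotate $\theta$ to $e_1$, whereas you invoke rotational invariance of each $\xi_k$ separately to identify the conditional law of $\scal{\xi_k}{\xi_0}$; these are equivalent.
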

\begin{proof}
We reproduce here an argument utilising rotational invariance from \cite{KK} attributed to Lata\l a. Let $\theta$ be a random vector uniform on $S^2$, independent of all the other variables. By rotational invariance, for a vector $x$ in $\R^3$, we have
\[
\E|\scal{x}{\theta}|^{-p} = \E|\scal{e_1\|x\|}{\theta}|^{-p} = \|x\|^{-p}\E|\theta_1|^{-p},
\]
where $\theta_1$ denotes the first component of $\theta$, so
\[
\|x\|^{-p} = \frac{\E|\scal{x}{\theta}|^{-p}}{\E|\theta_1|^{-p}}.
\]
Applying this to $x = \sum_{k=1}^n a_k\xi_k$ and taking the expectation gives
\[
\E_\xi\left\|\sum_{k=1}^n a_k\xi_k\right\|^{-p} = \E_\xi\E_\theta\frac{|\scal{\sum_{k=1}^n a_k\xi_k}{\theta}|^{-p}}{\E|\theta_1|^{-p}} = \frac{1}{\E|\theta_1|^{-p}}\E_\theta\E_\xi\left|\scal{\sum_{k=1}^n a_k\xi_k}{\theta}\right|^{-p}.
\]
By the rotational invariance of $\sum a_k\xi_k$, we also have
\[
\E_\xi\left|\scal{\sum_{k=1}^n a_k\xi_k}{\theta}\right|^{-p} = \E_\xi\left|\scal{\sum _{k=1}^na_k\xi_k}{e_1\|\theta\|}\right|^{-p}.
\]
However, $\theta$ is a unit vector and the random variables $\scal{\xi_k}{e_1}$ are i.i.d. uniform on $[-1,1]$, therefore
\[
\E_\xi\left|\scal{\sum_{k=1}^n a_k\xi_k}{e_1\|\theta\|}\right|^{-p} = \E_\xi\left|\sum_{k=1}^n a_k\scal{\xi_k}{e_1}\right|^{-p} = \E_U\left|\sum_{k=1}^n a_kU_k\right|^{-p}.
\]
Since $\theta_1$ is uniform on $[-1,1]$, we get $\E|\theta_1|^{-p} = \int_0^1 x^{-p} \dd x = \frac{1}{1-p}$. Putting these equations together finishes the proof.
\end{proof}

It follows from Lemma \ref{lm:konig-identity} that \eqref{eq:khin-unif} is equivalent to
\begin{equation}\label{eq:khin-sph-scal}
\E\left\|\sum_{k=1}^n a_k\xi_k\right\|^{-p} \leq (1-p)C_p\left(\sum_{k=1}^n a_k^2\right)^{-p/2}.
\end{equation}
We extend this to matrix-valued coefficients using isometrical embeddings into $L_p$ spaces (Orlicz-Szarek's argument, see Remark 3 in \cite{Sz}). This offers a sharp version of the very general result of Gorin and Favarov from \cite{GF} (see Corollary 2 therein) in the case of uniform vectors on $S^{2}$ and the $L_{-p}-L_2$ moment comparison. For a matrix $A $, $\|A\|_{HS}$ stands for its Hilbert-Schmidt norm.

\begin{corollary}\label{cor:khin-sph}
Let $p \in (0,1)$ and let $C_p$ be defined by \eqref{eq:Cp}. Let $\xi_1, \xi_2, \ldots$ be i.i.d. random vectors uniform on the unit sphere $S^2$ in $\R^3$. For every $n$ and every real $3 \times 3$ matrices $A_1, \dots, A_n$, we have
\begin{equation}\label{eq:khin-sph}
\E\left\|\sum_{k=1}^n A_k\xi_k\right\|^{-p} \leq C_p\left(\sum_{k=1}^n \|A_k\|_{HS}^2\right)^{-p/2}.
\end{equation}
\end{corollary}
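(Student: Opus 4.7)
The plan is to combine the scalar inequality of Theorem~\ref{thm:khin-unif} with the rotational-invariance trick used in the proof of Lemma~\ref{lm:konig-identity}. Let $\theta$ be uniform on $S^{2}$, independent of the $\xi_{k}$'s. Since $\theta_{1}$ is uniform on $[-1,1]$, $\E|\theta_{1}|^{-p}=\frac{1}{1-p}$, and rotational invariance gives the isometric representation $\|x\|^{-p} = (1-p)\,\E_{\theta}|\scal{x}{\theta}|^{-p}$ for every $x \in \R^{3}$. Applying this to $x = \sum_{k} A_{k}\xi_{k}$ and invoking Fubini, I obtain
\[
\E\left\|\sum_{k=1}^n A_{k}\xi_{k}\right\|^{-p} = (1-p)\,\E_{\theta}\,\E_{\xi}\left|\sum_{k=1}^n \scal{\xi_{k}}{A_{k}^T\theta}\right|^{-p}.
\]

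Now I would freeze $\theta$ and use that for any vector $v\in\R^{3}$, $\scal{\xi_{k}}{v}$ has the distribution of $\|v\|\,U_{k}$ with $U_{k}$ uniform on $[-1,1]$ (Archimedes' hat-box theorem), and independence of the $\xi_{k}$ transfers to independence of the $U_{k}$. The inner conditional expectation therefore equals $\E_{U}|\sum_{k}\|A_{k}^T\theta\|\,U_{k}|^{-p}$, which by Theorem~\ref{thm:khin-unif} applied to the scalar coefficients $a_{k}=\|A_{k}^T\theta\|$ is bounded by $C_{p}\bigl(\sum_{k}\|A_{k}^T\theta\|^{2}\bigr)^{-p/2}$. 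Setting $M=\sum_{k}A_{k}A_{k}^T$, one has $\sum_{k}\|A_{k}^T\theta\|^{2}=\scal{M\theta}{\theta}$ and $\mathrm{tr}(M)=\sum_{k}\|A_{k}\|_{HS}^{2}$, so the corollary reduces to the purely deterministic inequality
\[
(1-p)\,\E_{\theta}\scal{M\theta}{\theta}^{-p/2} \leq (\mathrm{tr}\,M)^{-p/2}, \qquad M\succeq 0.
\]

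Diagonalising $M$ and using the rotational invariance of $\theta$, this amounts to $(1-p)F(\lambda)\leq(\sum_{i}\lambda_{i})^{-p/2}$, where $\lambda=(\lambda_{1},\lambda_{2},\lambda_{3})\in\R_{\geq 0}^{3}$ are the eigenvalues of $M$ and $F(\lambda) = \E_{\theta}\bigl(\sum_{i}\lambda_{i}\theta_{i}^{2}\bigr)^{-p/2}$. For each $\theta$ the integrand is a convex (decreasing) function of a linear functional of $\lambda$, so $F$ is convex on $\R_{>0}^{3}$ and clearly $(-p/2)$-homogeneous. On the simplex $\{\lambda\geq 0:\sum_{i}\lambda_{i}=T\}$ a convex function attains its maximum at an extreme point, hence $F(\lambda)\leq \max_{i}F(Te_{i}) = T^{-p/2}\,\E|\theta_{i}|^{-p}=T^{-p/2}/(1-p)$. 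Choosing $T=\mathrm{tr}(M)$ finishes the argument.

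The only real obstacle is the last deterministic step: Jensen's inequality points the wrong way for the convex function $t\mapsto t^{-p/2}$, so I use a convexity-at-vertices argument on the simplex of eigenvalues of $M$. This matches the rank-$1$ equality case (corresponding exactly to $n=1$ and $A_{1}=ae_{1}e_{1}^T$) and shows that no constant is lost in the passage from scalar to matrix coefficients; in particular, sharpness of $C_{p}$ in \eqref{eq:khin-sph} is inherited from Theorem~\ref{thm:khin-unif}.
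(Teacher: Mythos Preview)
Your proof is correct and takes a genuinely different route from the paper's at the final step.

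Both arguments begin identically: represent $\|x\|^{-p}$ via an auxiliary random vector (you use $\theta$ uniform on $S^2$, the paper uses a standard Gaussian $G$ in $\R^3$, but these are interchangeable here), then apply Theorem~\ref{thm:khin-unif} conditionally to reduce matters to bounding $\E_\theta\scal{M\theta}{\theta}^{-p/2}$ (respectively $\E_G\scal{MG}{G}^{-p/2}$) by a constant times $(\mathrm{tr}\,M)^{-p/2}$, where $M=\sum_k A_kA_k^\top$.

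The paper handles this last step by the Orlicz--Szarek device: it rewrites $\scal{MG}{G}=3\,\E_\xi\bigl|\sum_k\scal{\xi_k}{A_k^\top G}\bigr|^2$, applies Minkowski's inequality with the negative exponent $-2/p$ to swap the $G$- and $\xi$-integrations, and then uses the isometric representation once more to collapse everything back to $\bigl(\E_\xi\|\sum_k A_k\xi_k\|^2\bigr)^{-p/2}$. Your argument instead diagonalises $M$ and observes that $F(\lambda)=\E_\theta\bigl(\sum_i\lambda_i\theta_i^2\bigr)^{-p/2}$ is convex in $\lambda$ (as an average of convex functions of a linear form), so writing $\lambda=\sum_i(\lambda_i/T)\,Te_i$ with $T=\mathrm{tr}\,M$ gives $F(\lambda)\le\max_i F(Te_i)=T^{-p/2}/(1-p)$ directly. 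This is more elementary: it avoids the reverse Minkowski inequality entirely and makes the equality case (rank-one $M$, i.e.\ scalar coefficients) transparent. The paper's route, on the other hand, is the canonical embedding argument and would transfer verbatim to settings where the ``simplex'' structure is less explicit. One small remark: your phrase ``a convex function attains its maximum at an extreme point'' is safest read as the direct Jensen bound $F(\sum\mu_iv_i)\le\max_iF(v_i)$ rather than an appeal to compactness, since that sidesteps any continuity issues at the boundary of the simplex; the argument is valid either way because $F$ is finite at the vertices for $p<1$.
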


\begin{remark}
Both \eqref{eq:khin-sph-scal} and \eqref{eq:khin-sph} are sharp. The constant in \eqref{eq:khin-sph} is larger than in \eqref{eq:khin-sph-scal} . The former specialised to the case when each matrix $A_k$ is proportional to the matrix $\left[\begin{smallmatrix} 1 & 0 & 0 \\ 0 & 0 & 0\\ 0 & 0 & 0\end{smallmatrix}\right]$ reduces to \eqref{eq:khin-unif}.
\end{remark}

\begin{remark}
A sharp reversal of \eqref{eq:khin-sph-scal} (analogously of \eqref{eq:khin-sph}) is immediate from convexity,
\[
\E\left\|\sum_{k=1}^n a_k\xi_k\right\|^{-p} \geq \left(\E\left\|\sum_{k=1}^n a_k\xi_k\right\|\right)^{-p} \geq \left(\left(\E\left\|\sum_{k=1}^n a_k\xi_k\right\|^2\right)^{1/2}\right)^{-p} = \left(\sum_{k=1}^n a_k^2\right)^{-p/2}.
\]
By \eqref{eq:vol-identity}, the case $p=1$ of this inequality gives yet another simple proof of Hadwiger's and Hensley's result (see \cite{Ha} and \cite{H}, see also Theorem 2 in \cite{Ball}).
\end{remark}

\section{Proof overview}

Haagerup's work \cite{Haa}  can perhaps be seen as a landmark in the pursuit of sharp Khinchin-type inequalities. Later, Nazarov and Podkorytov in \cite{NP} offered an informative exposition of \cite{Haa} (and \cite{Ball}), developing novel tools which allowed for significant simplications of the most technically demanding parts of \cite{Haa} (as well as of \cite{Ball}). We shall closely follow their approach which comprises two main steps. (For other works which used techniques from \cite{NP} to establish sharp Khinchin-type inequalities, we refer for instance to \cite{K, Mo}.)

\emph{Step I (Section \ref{sec:small}).} We prove \eqref{eq:khin-unif} in the case that all weights $a_k$ are ``small'', that is for the sequences $a = (a_k)_{k=1}^n$ with $\max_{k \leq n} |a_k| \leq \frac{1}{\sqrt{2}}\left(\sum_{k=1}^n a_k^2\right)^{1/2}$ (call it Case A). This in turn is accomplished by a Fourier-analytic expression for negative moments (used for instance in \cite{GF}), which allows to leverage independence. As in \cite{Ball}, by the use of H\"older's inequality, the following integral inequality allows to finish the whole argument,
\begin{equation}\label{eq:ball-int}
s^{p/2}\int_0^\infty \left|\frac{\sin t}{t}\right|^s t^{p-1} \dd t \leq 2^{p-1}\sqrt{\pi}\frac{\Gamma\left(\frac{p}{2}\right)}{\Gamma\left(\frac{1-p}{2}\right)}C_p, \qquad 0 < p < 1, s \geq 2.
\end{equation}
This inequality is an extension of Ball's integral inequality from \cite{Ball} and is proved with the methods of \cite{NP}. For other refinements and extensions of Ball's integral and cube slicing inequalities see \cite{EM, KOS, KKol, KKol2, LPP}.

\emph{Step II (Section \ref{sec:large}).} With the aid of the result of Step I, we use induction on $n$ to prove a certain strengthening of \eqref{eq:khin-unif} for \emph{all} sequences $a = (a_k)_{k=1}^n$ in order to handle those which do not satisfy Case A, that is have a ``large'' weight (call those Case B). Were \eqref{eq:ball-int} true for all $s \geq 1$, this step would have been spared. In \cite{NP} the inductive step is possible thanks to an algebraic identity obtained by averaging with respect to one random sign. In our setting, for uniform $[-1,1]$ random variables, such an identity does not seem to present itself. To overcome this obstacle, we work with $S^2$-uniform random vectors for which certain algebraic identities allowing for induction are much more natural. For Ball's inequality \eqref{eq:L-1-1} (case $p = 1$), this step was in \cite{Ball} taken care of by a simple projection argument, but its analogue for $p<1$ is not sufficient (see Remark \ref{rem:proj} at the end of Section \ref{sec:large}).

We remark that in the range $p \in (0, p_0)$ when $C_p = c_\infty(p)$ and the extremizing sequence is $a_1 = \dots = a_n = \frac{1}{\sqrt{n}}$ with $n\to \infty$, it is only Case A which admits equality (attained asymptotically as $n\to \infty$), whereas in the range $p \in (p_0,1)$ when $C_p = c_2(p)$ and the extremizing sequence is $a_1 = a_2 = \frac{1}{\sqrt{2}}$, $n = 2$, both Case A and B admit equality (in Case B when taking $n=2$ and $a_1 = \frac{1}{\sqrt{2-\delta}}$, $a_2 = \frac{1}{\sqrt{2+\delta}}$, $\delta \to 0+$) and hence both Step I and II have to be subtle enough to overcome this difficulty.

As a final comment here, convexity-type arguments leading to more precise results such as Schur-convexity of moments of sums with a \emph{fixed} number of summands $n$ (see \cite{AH, BC, Eat, ENT1, FHJSZ, HT, Kom, KK, Pe}) do not seem to be available here. One of the obstacles is for instance the fact that the function $t \mapsto \E|U_1 + \sqrt{t}U_2|^{-p}$ is not convex/concave on the whole half-line $(0,+\infty)$ (it is concave on $(0,1)$ and convex on $(1,+\infty)$).

\section{Technical lemmas}\label{sec:tech-lm}

We gather several elementary but technical results needed in our proofs. The first one explains the comparison between the constants $c_2(p)$ and $c_\infty(p)$ arising from two different extremizing sequences of weights $a_k$ in our Khinchin inequality.

\begin{lemma}\label{lm:c2-cinf}
Let $c_2(p), c_\infty(p)$ be defined in \eqref{eq:consts}. The equation $c_2(p) = c_\infty(p)$ has a unique solution $p_0 = 0.793...$ on $(0,1)$. Moreover, $c_2(p) > c_\infty(p)$ for $p \in (p_0,1)$, whilst $c_2(p) < c_\infty(p)$ for $p \in (0,p_0)$.
\end{lemma}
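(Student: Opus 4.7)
The plan is to prove strict convexity of $H(p) := \log c_2(p) - \log c_\infty(p)$ on $(0,1)$ together with the right boundary behavior; this immediately pins down the unique zero. Direct computation gives $c_2(0) = 1$ and $c_\infty(0) = \Gamma(1/2)/\sqrt\pi = 1$, so $H(0^+) = 0$. Using $\Gamma(x) \sim 1/x$ as $x \to 0^+$, one obtains $(1-p)c_\infty(p) \to \sqrt{6/\pi}$ as $p\to 1^-$, while $(1-p)c_2(p) \to \sqrt{2}$; since $\pi > 3$, $H(1^-) = +\infty$. Moreover, from
\[
H'(p) = -\tfrac{\log 2}{2} + \tfrac{1}{1-p} + \tfrac{1}{2-p} - \tfrac{\log(3/2)}{2} + \tfrac12\psi\!\left(\tfrac{1-p}{2}\right)
\]
and $\psi(1/2) = -\gamma - 2\log 2$, one computes $H'(0^+) = \tfrac{3}{2} - \tfrac{\log 12 + \gamma}{2}$, which is negative (numerically $\approx -0.031$). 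Hence $H < 0$ immediately to the right of $0$, and the intermediate value theorem yields at least one zero in $(0,1)$.

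For uniqueness, I aim to show $H'' > 0$ on $(0,1)$. Differentiating,
\[
H''(p) = \frac{1}{(1-p)^2} + \frac{1}{(2-p)^2} - \frac{1}{4}\psi'\!\left(\tfrac{1-p}{2}\right).
\]
Setting $u = (1-p)/2 \in (0,1/2)$ and using $\psi'(u) = 1/u^2 + \psi'(u+1)$, the inequality $H''(p) > 0$ reduces after cancellation to
\[
\frac{4}{(2u+1)^2} \;>\; \psi'(u+1) = \sum_{n\ge 1}\frac{1}{(n+u)^2}, \qquad u \in (0,1/2).
\]
This is the main estimate. Since $x \mapsto 1/x^2$ is convex, Jensen's inequality on each unit interval $[n+u-\tfrac12,\,n+u+\tfrac12]$ gives $(n+u)^{-2} \le \int_{n+u-1/2}^{n+u+1/2} x^{-2}\,dx$; summing over $n\ge 1$,
\[
\psi'(u+1) \le \int_{u+1/2}^\infty \frac{dx}{x^2} = \frac{2}{2u+1}.
\]
Since $2u+1 < 2$ strictly for $u<1/2$, we obtain $\tfrac{4}{(2u+1)^2} > \tfrac{2}{2u+1} \ge \psi'(u+1)$, establishing strict convexity of $H$.

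Strict convexity of $H$ together with $H(0^+) = 0$, $H'(0^+) < 0$, and $H(1^-) = +\infty$ forces $H$ to decrease on some initial interval $(0, p^*)$ and strictly increase on $(p^*, 1)$ up to $+\infty$, hence vanish exactly once at some $p_0 \in (p^*, 1)$. The signs of $H$ on either side of $p_0$ then give the full comparison $c_\infty > c_2$ on $(0, p_0)$ and $c_2 > c_\infty$ on $(p_0, 1)$. I expect the main technical obstacle to be the trigamma estimate, but it reduces cleanly to the one-line Jensen bound above; everything else is bookkeeping of boundary values and elementary calculus.
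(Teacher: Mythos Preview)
Your approach is essentially the same as the paper's: both hinge on the strict convexity of $H(p)=\log c_2(p)-\log c_\infty(p)$, which is equivalent to the paper's claim that $p\mapsto\log\bigl((1-p)(2-p)\Gamma(\tfrac{1-p}{2})\bigr)$ is strictly concave (since $H$ differs from minus this function by an affine term). After the substitution $u=(1-p)/2$ and the recurrence $\psi'(u)=u^{-2}+\psi'(u+1)$, both proofs reduce to the identical estimate $\tfrac{4}{(2u+1)^2}>\psi'(u+1)$ on $(0,\tfrac12)$. Your integral-comparison bound $\psi'(u+1)\le\tfrac{2}{2u+1}$ is a neater way to finish than the paper's more hands-on argument; conversely, the paper checks $f(2/3)<0$ numerically at an interior point, while you use $H'(0^+)<0$, which requires the exact value $\psi(1/2)=-\gamma-2\log 2$ and a numerical check of its own.

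One correction: your limits $(1-p)c_2(p)\to\sqrt{2}$ and $(1-p)c_\infty(p)\to\sqrt{6/\pi}$ are right, but they give $c_2/c_\infty\to\sqrt{\pi/3}$ and hence $H(1^-)=\tfrac12\log(\pi/3)$, a finite positive number, not $+\infty$. This does not affect the argument, since all you need is $H(1^-)>0$ together with $H(0^+)=0$, $H'(0^+)<0$, and strict convexity to force exactly one zero in $(0,1)$.
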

\begin{proof}
For $p \in (0,1)$, the difference $c_2(p) - c_\infty(p)$ has the same sign as
\[
f(p) = 2\sqrt{\pi}3^{-p/2} - (1-p)(2-p)\Gamma\left(\frac{1-p}{2}\right).
\]
\textbf{Claim.} The function $p \mapsto \log\left((1-p)(2-p)\Gamma\left(\frac{1-p}{2}\right)\right)$ is strictly concave on $(0,1)$.

Note that $f(0+) = 0$, $f(1-) = 2(\sqrt{\pi/3}-1) > 0$  ($u\Gamma(u) \to 1$ as $u \to 0$) and $f(\frac{2}{3}) = 2\sqrt{\pi}3^{-1/3}-\frac{4}{9}\Gamma(\frac{1}{6}) < -0.016$. In view of the claim (after taking the logarithm and noting that a linear function intersects a strictly concave function at most twice), the proof of the lemma is finished. 

To prove the claim, we let $u = \frac{1-p}{2}$ and $h(u) = -\log\left(u\left(u+\frac{1}{2}\right)\Gamma(u)\right)$. We want to show that $h$ is strictly convex on $(0,\frac12)$. Recall $(\log \Gamma(u))'' = \sum_{k=0}^\infty \frac{1}{(u+k)^2}$. Thus for $u \in (0,\frac{1}{2})$,
\begin{align*}
h''(u) &= \frac{1}{u^2} + \frac{1}{(u+\frac{1}{2})^2} - \sum_{k=0}^\infty \frac{1}{(u+k)^2}
> \frac{1}{(u+\frac{1}{2})^2} - \frac{1}{(u+1)^2} - \frac{1}{(u+2)^2} - \sum_{k=3}^\infty \frac{1}{k^2}.
\end{align*}
We now show that the right hand side is positive on $(0,\frac{1}{2})$. Call it $h_1(u)$ and note that
\[
\frac12h_1'(u) = \frac{1}{(u+1)^3} + \frac{1}{(u+2)^3} - \frac{1}{(u+\frac12)^3} < \frac{2}{(u+1)^3} - \frac{1}{(u+\frac12)^3}.
\]
The right hand side has the same sign as $\left(\frac{u+\frac12}{u+1}\right)^3-\frac{1}{2}$ which is clearly increasing in $u$, thus at most $\left(\frac{2}{3}\right)^3 - \frac{1}{2} = -\frac{11}{54} < 0$ for $u \in (0,\frac12)$. Thus $h_1(u)$ is decreasing for $u \in (0,\frac12)$. Going back to the lower bound $h''(u) > h_1(u)$, we conclude that $h''(u) > h_1(u) > h_1(\frac{1}{2}) = \frac{1481}{900} - \frac{\pi^2}{6} > 0.0006$, for $u \in (0,\frac{1}{2})$. This shows that $h$ is strictly convex on $(0,\frac{1}{2})$.
\end{proof}

The next three lemmas are elementary facts about functions showing up in calculations from Step I (Section \ref{sec:small}) needed to prove the integral inequality \eqref{eq:ball-int}.

\begin{lemma}\label{lm:cos-sin}
$\sup_{t \in \R}\left|\cos t - \frac{\sin t}{t}\right| < \frac{6}{5}$.
\end{lemma}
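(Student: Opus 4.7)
My plan is to use evenness of $g(t) := \cos t - \tfrac{\sin t}{t}$ (with $g(0) := 0$ by continuity) to restrict attention to $t \geq 0$, and then split the analysis into two regimes that are each handled by a one-line estimate, with a convenient cutoff at $t = 2$.

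For small $t$, the key identity comes from integration by parts. Since $(\tau\cos\tau)' = \cos\tau - \tau\sin\tau$, integrating from $0$ to $t$ gives $t\cos t - \sin t = -\int_0^t \tau\sin\tau\,\dd\tau$, and hence
\[
g(t) = \frac{t\cos t - \sin t}{t} = -\frac{1}{t}\int_0^t \tau\sin\tau\,\dd\tau.
\]
Bounding $|\sin\tau|\leq 1$ immediately yields $|g(t)| \leq \tfrac{1}{t}\cdot\tfrac{t^2}{2} = \tfrac{t}{2}$, so $|g(t)| \leq 1 < \tfrac{6}{5}$ on $[0,2]$.

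For the complementary range $t \geq 2$, I would apply the triangle inequality together with Cauchy--Schwarz for the vectors $(|\cos t|, |\sin t|)$ and $(1, 1/t)$:
\[
|g(t)| \leq |\cos t| + \frac{|\sin t|}{t} \leq \sqrt{\cos^2 t + \sin^2 t}\cdot\sqrt{1 + \tfrac{1}{t^2}} = \sqrt{1 + \tfrac{1}{t^2}} \leq \frac{\sqrt 5}{2},
\]
and $\tfrac{\sqrt 5}{2} < \tfrac{6}{5}$ since $\tfrac{5}{4} < \tfrac{36}{25}$.

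I do not anticipate any substantive obstacle: both estimates are crude (numerically $\sup|g|\approx 1.06$, far below $6/5$), and the only real choice is the location of the cutoff. The value $t = 2$ is a convenient split where the linear bound $t/2$ and the Cauchy--Schwarz bound $\sqrt{1 + 1/t^2}$ both come in comfortably below $\tfrac{6}{5}$.
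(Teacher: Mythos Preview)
Your proof is correct and follows essentially the same approach as the paper: reduce to $t>0$ by evenness, use the Cauchy--Schwarz bound $\sqrt{1+1/t^2}$ for large $t$, and handle small $t$ by an elementary estimate. The only difference is cosmetic: the paper takes the cutoff at $t=5/\sqrt{11}<\pi/2$ and on $(0,\pi/2)$ simply observes $\frac{\sin t}{t}-\cos t<1-0=1$, whereas you take the cutoff at $t=2$ and use the integration-by-parts bound $|g(t)|\leq t/2$.
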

\begin{proof}
Since both $\cos t$ and $\frac{\sin t}{t}$ are even, it suffices to consider positive $t$. By the Cauchy-Schwarz inequality, we have $\left|\cos t - \frac{\sin t}{t}\right| \leq \sqrt{1+\frac{1}{t^2}}$, so it suffices to consider $t \leq \frac{5}{\sqrt{11}}$. It remains to note that $\frac{5}{\sqrt{11}} < \frac{\pi}{2}$ and that on $(0, \frac{\pi}{2})$, we have $\left|\cos t - \frac{\sin t}{t}\right| = \frac{\sin t}{t} - \cos t < 1 +  0 = 1$.
\end{proof}

\begin{lemma}\label{lm:t0}
Let $y_1 = \max_{t \in [\pi, 2\pi]} \left|\frac{\sin t}{t}\right|$. For $y \in (0,y_1)$, let $t = t_0$ be the unique solution to $\frac{\sin t}{t} = y$ on $(0,\pi)$. Then $t_0 > 2$.
\end{lemma}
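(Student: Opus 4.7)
The plan is to use the strict monotonicity of $\sin t/t$ on $(0,\pi)$ to translate the target inequality $t_0>2$ into a numerical comparison, and then to bound $y_1$ crudely enough to close the gap. First I would verify that $\psi(t):=\sin t/t$ is strictly decreasing on $(0,\pi)$: its derivative is $(t\cos t-\sin t)/t^2$, and $g(t):=t\cos t-\sin t$ satisfies $g(0)=0$ and $g'(t)=-t\sin t<0$ on $(0,\pi)$, hence $g<0$ throughout. Consequently, the unique $t_0\in(0,\pi)$ with $\psi(t_0)=y$ satisfies $t_0>2$ if and only if $y<\psi(2)=\sin(2)/2$.

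Next I would bound $y_1$ crudely: for $t\in[\pi,2\pi]$, $|\sin t/t|\leq 1/t\leq 1/\pi$, so $y<y_1\leq 1/\pi$. It then suffices to verify $1/\pi<\sin(2)/2$, equivalently $\sin 2>2/\pi$. For this I would invoke Jordan's inequality $\sin x\geq 2x/\pi$ on $[0,\pi/2]$ at the point $x=\pi-2\in(0,\pi/2)$, which gives $\sin 2=\sin(\pi-2)\geq 2(\pi-2)/\pi=2-4/\pi$; and $2-4/\pi>2/\pi$ is equivalent to $\pi>3$. Chaining, $y<1/\pi<\sin(2)/2$, and by the rephrasing, $t_0>2$.

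I do not anticipate a serious obstacle here. Numerically $y_1\approx 0.217$ while $\sin(2)/2\approx 0.454$, so the loose upper bound $y_1\leq 1/\pi\approx 0.318$ leaves ample room. The only place where a careless estimate could fail is the comparison $\sin 2>2/\pi$; routing through $\sin 2=\sin(\pi-2)$ is what puts us in the range where Jordan's inequality applies directly.
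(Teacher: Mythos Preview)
Your proof is correct and follows essentially the same approach as the paper: both use strict monotonicity of $\sin t/t$ on $(0,\pi)$ together with the crude bound $y_1<1/\pi$, then compare $1/\pi$ with a suitable value of $\sin t/t$. The only cosmetic difference is the reference point: the paper compares with $\sin(2\pi/3)/(2\pi/3)=3\sqrt{3}/(4\pi)$, reducing the check to $4<3\sqrt{3}$, whereas you compare directly with $\sin(2)/2$ and invoke Jordan's inequality to close the gap.
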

\begin{proof}
Since $y_1 < \frac{1}{\pi}$, for every $y\in(0,y_1)$, we have
$\frac{\sin t_0}{t_0} = y < \frac{3\sqrt{3}}{4\pi} = \frac{\sin(2\pi/3)}{2\pi/3}$.
Since $\frac{\sin t}{t}$ is decreasing on $(0,\pi)$, it follows that $t_0>\frac{2\pi}{3} > 2$.
\end{proof}

\begin{lemma}\label{lm:gamma}
For every $p \in (0,1)$, we have $(1-p)(2-p)\Gamma\left(\frac{1-p}{2}\right) \geq 2^{\frac{3-p}{2}}$.
\end{lemma}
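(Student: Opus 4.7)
The plan is to simplify via the substitution $u = \frac{1-p}{2} \in (0, 1/2)$, under which $1-p = 2u$, $2-p = 1+2u$, $\frac{3-p}{2} = 1+u$, and the identity $u\Gamma(u) = \Gamma(u+1)$ recasts the inequality as
\[
(1+2u)\Gamma(u+1) \geq 2^u, \qquad u \in [0, 1/2].
\]
Both sides equal $1$ at $u=0$ (i.e., the inequality is sharp in the limit $p \to 1^-$), so it is natural to take logarithms and aim to show
\[
\log(1+2u) + \log\Gamma(u+1) \geq u\log 2
\]
by bounding each summand from below by an explicit linear function of $u$.

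For the first summand, $u \mapsto \log(1+2u)$ is concave on $[0, 1/2]$ with boundary values $0$ and $\log 2$, so it lies above the chord joining its endpoints:
\[
\log(1+2u) \geq 2u\log 2.
\]
For the second, $\log\Gamma$ is convex on $(0, \infty)$ (as made explicit by the identity $(\log\Gamma)''(u) = \sum_{k=0}^\infty (u+k)^{-2}$ already invoked in the proof of Lemma \ref{lm:c2-cinf}), so $\log\Gamma(u+1)$ lies above its tangent line at $u=0$, which is $\log\Gamma(1) + u\,\psi(1) = -\gamma u$.

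Adding the two bounds yields
\[
\log(1+2u) + \log\Gamma(u+1) \geq (2\log 2 - \gamma)\,u,
\]
and the desired inequality reduces to $2\log 2 - \gamma \geq \log 2$, i.e., the classical estimate $\gamma < \log 2$ (numerically, $\gamma \approx 0.5772 < 0.6931 \approx \log 2$). No step appears to present a serious obstacle; the only subtlety is that both linearizations must be sharp enough, and indeed the slack $\log 2 - \gamma$ is precisely what keeps the inequality alive. Had the chord/tangent bounds fallen short, the fallback would have been to verify $(\log[(1+2u)\Gamma(u+1)])'(u) \geq \log 2$ directly, using the concavity of $p \mapsto \log[(1-p)(2-p)\Gamma(\tfrac{1-p}{2})]$ from Lemma \ref{lm:c2-cinf} to reduce to checking the derivative at the single endpoint $u = 1/2$.
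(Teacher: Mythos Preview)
Your proof is correct and takes a genuinely different route from the paper's. The paper simply invokes the Claim from Lemma~\ref{lm:c2-cinf} (strict concavity of $p\mapsto\log[(1-p)(2-p)\Gamma(\tfrac{1-p}{2})]$ on $(0,1)$); since the logarithm of the right-hand side $2^{(3-p)/2}$ is linear in $p$, the inequality reduces to checking the two endpoints $p=0$ (where $2\sqrt{\pi}\geq 2^{3/2}$) and $p\to 1^-$ (where equality holds). Your argument is instead self-contained: after the substitution $u=\tfrac{1-p}{2}$ you bound $\log(1+2u)$ below by its chord and $\log\Gamma(u+1)$ below by its tangent at $u=0$, and close with the classical inequality $\gamma<\log 2$. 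The paper's approach is shorter because it recycles work already done; yours is more elementary in isolation and does not depend on the somewhat intricate concavity verification in Lemma~\ref{lm:c2-cinf}, at the modest price of appealing to a numerical fact about~$\gamma$. Amusingly, the ``fallback'' you sketch at the end---reducing to a derivative check at a single endpoint via the concavity from Lemma~\ref{lm:c2-cinf}---is essentially the paper's actual argument.
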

\begin{proof}
Thanks to the claim from Lemma \ref{lm:c2-cinf}, it suffices to check the stated inequality at the endpoints: for $p = 0$, it becomes $2\sqrt{\pi} \geq 2^{3/2}$ which clearly holds, whereas for $p \to 1-$, it becomes equality.
\end{proof}

The following lemma is an important step in the proof of \eqref{eq:ball-int}. Essentially it is a consequence of convexity of sums of exponential functions.

\begin{lemma}\label{lm:F/G}
For $p \in (0,1)$ and $m = 1, 2, \dots$, we set
\[
R_m(p) = \frac{5}{3}\Big(\pi^{-1/2}2^{1/2-p}\Big)\frac{\Big(\log \big(\pi(m+3/2)\big)\Big)^{1-p/2}}{\pi(m+3/2)} \left(2^p + 2\pi^p\sum_{k=1}^m k^p\right).
\]
We have, $R_m(p) > 1$.
\end{lemma}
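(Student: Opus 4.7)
The plan is to prove $R_m(p) > 1$ by induction on $m$: show the monotonicity $R_{m+1}(p) \geq R_m(p)$ for $m \geq 1$, $p \in (0,1)$, which reduces the whole claim to the base case $R_1(p) > 1$. Both steps hinge on elementary integral/summation comparisons together with some careful numerical estimates.

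For the inductive step, decompose the ratio as
\[
\frac{R_{m+1}(p)}{R_m(p)} = \frac{m+3/2}{m+5/2} \cdot \left(\frac{\log \pi(m+5/2)}{\log \pi(m+3/2)}\right)^{1-p/2} \cdot \frac{2^p + 2\pi^p \sum_{k=1}^{m+1} k^p}{2^p + 2\pi^p \sum_{k=1}^m k^p}.
\]
The middle factor is $\geq 1$, so it suffices to prove that the product of the first and third factors is $\geq 1$, which after clearing denominators amounts to
\[
(m+3/2)\cdot 2\pi^p(m+1)^p \geq 2^p + 2\pi^p\sum_{k=1}^m k^p,
\]
equivalently, $(m+3/2)(m+1)^p \geq (2/\pi)^p/2 + \sum_{k=1}^m k^p$. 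Using the integral bound $\sum_{k=1}^m k^p \leq \int_1^{m+1} x^p\,dx \leq (m+1)^{p+1}/(p+1)$ and $(2/\pi)^p/2 \leq 1/2$, this further reduces (after algebra) to the clean inequality $(m+1)^p(1 + p(2m+3)) \geq p+1$, which is immediate for $m \geq 1$ and $p\in(0,1)$ since $(m+1)^p \geq 1$ and $1 + p(2m+3) \geq 1+5p \geq 1+p$.

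For the base case, simplify
\[
R_1(p) = \frac{2\sqrt{2}}{3\pi^{3/2}}\, (\log(5\pi/2))^{1-p/2}\bigl(1 + 2(\pi/2)^p\bigr),
\]
and show $R_1$ is decreasing on $(0,1)$ via its logarithmic derivative
\[
\frac{R_1'(p)}{R_1(p)} = -\tfrac{1}{2}\log\log(5\pi/2) + \frac{2(\pi/2)^p\log(\pi/2)}{1+2(\pi/2)^p}.
\]
The first term equals $\approx -0.362$, while the second is monotonically bounded for $p\in(0,1)$ by $\log(\pi/2)\cdot \frac{\pi}{1+\pi}\approx 0.343$; hence $R_1'(p)<0$. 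Therefore $R_1(p) > R_1(1) = \frac{4(1+\pi)}{3\pi\sqrt{2\pi}}\sqrt{\log(5\pi/2)}$, and the required inequality $R_1(1) > 1$ becomes $16(1+\pi)^2\log(5\pi/2) > 18\pi^3$, which one checks by hand using $\log(5\pi/2) > 2.06$ (since $e^{2.06} < 7.847 < 5\pi/2$), yielding left-hand side $> 16\cdot 17.17\cdot 2.06 \approx 566$ against right-hand side $< 18\cdot 31.01 < 559$.

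The main obstacle is the tightness of the base case: $R_1(1) \approx 1.007$ is dangerously close to 1, so the numerical estimates (in particular the lower bound on $\log(5\pi/2)$) must be sharp enough to close the gap, whereas everything else — monotonicity in $m$, decreasing behaviour in $p$ — is governed by soft inequalities. Once these sharp constants are pinned down, the rest of the argument is essentially mechanical.
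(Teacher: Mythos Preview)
Your proof is correct and takes a genuinely different route from the paper's. One small numerical slip: you use $(1+\pi)^2 > 17.17$, but in fact $(1+\pi)^2 \approx 17.153$. This does not affect the conclusion, since with the correct bound one still gets $16\cdot 17.15\cdot 2.06 > 565 > 559 > 18\pi^3$.

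The paper instead fixes $m$ and exploits that $R_m(p)$, written as $A_m\big(a_{0,m}^p + 2\sum_{k=1}^m a_{k,m}^p\big)$, is a convex function of $p$. For $m=1$ it checks $R_1'(1)<0$ (hence $R_1$ decreasing, then $R_1(1)>1$, essentially your base case), whereas for each $m\geq 2$ it checks $R_m'(0)>0$ (hence $R_m$ increasing by convexity) and then verifies $R_m(0)>1$ directly. So the paper treats each $m$ separately with two endpoint regimes, while you collapse all $m\geq 2$ to $m=1$ in one stroke via the monotonicity $R_{m+1}(p)\geq R_m(p)$. Your reduction is arguably cleaner and more uniform; the paper's approach, on the other hand, avoids the integral comparison for $\sum k^p$ and the somewhat delicate algebra of the ratio $R_{m+1}/R_m$, trading it for a Stirling-type estimate on $b_m=a_{0,m}\prod a_{k,m}^2$. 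Both arguments ultimately funnel through the same tight numerical check $R_1(1)\approx 1.007>1$.
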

\begin{proof}
For $m = 1,2,\dots$, we let
\begin{align*}
A_m &= \frac{5}{3}\sqrt{\frac{2}{\pi}}\frac{\log\big(\pi(m+3/2)\big)}{\pi(m+3/2)},\\
a_{0,m} &= \Big(\log\big(\pi(m+3/2)\big)\Big)^{-1/2}, \\
a_{k,m} &= \frac{\pi k}{2}\Big(\log\big(\pi(m+3/2)\big)\Big)^{-1/2}, \quad k = 1, 2, \dots, m.
\end{align*}
Then
\[
R_m(p) = A_m\left(a_{0,m}^p + 2\sum_{k=1}^m a_{k,m}^p\right),
\]
which is a sum of convex functions, thus $R_m(p)$ is convex.

\emph{Case $m = 1$.} We have, $R_1'(p) \leq R_1'(1) = A_1(a_{0,1}\log a_{0,1} + 2a_{1,1}\log a_{1,1}) < -0.019$, so $R_1$ is decreasing on $(0,1)$. Thus for every $p \in (0,1)$, we have $R_1(p) \geq R_1(1) = A_1(a_{0,1}+2a_{1,1}) > 1.006$, as desired.

\emph{Case $m \geq 2$.} We have,
\[
R_m'(0) = A_m\left(\log a_{0,m} + 2\sum_{k=1}^m\log a_{k,m}\right) = A_m\log\left(a_{0,m}\prod_{k=1}^m a_{k,m}^2\right)
\]
and
\[
b_m = a_{0,m}\prod_{k=1}^m a_{k,m}^2 = \left(\frac{\pi}{2}\right)^{2m}(m!)^2\Big(\log\big(\pi(m+3/2)\big)\Big)^{-1/2-m}.
\]
We check directly that $b_2 > 2.7$ and $b_3 > 17$. For $m \geq 4$, we use the standard estimate $m! > \sqrt{2\pi m}\left(\frac{m}{e}\right)^m$ and $\log\big(\pi(m+3/2)\big) = \log\frac{3\pi}{2} + \log(1 + \frac{2}{3}m) < 2 + \frac{2m}{3} \leq \frac{m}{2} + \frac{2m}{3} = \frac{7m}{6}$ to obtain
\[
b_m > \frac{2\pi m}{\sqrt{\frac{7m}{6}}}\left(\frac{\pi^2}{4e^2}\frac{m^2}{\frac{7m}{6}}\right)^m > \left(\frac{6}{7}\frac{\pi^2}{e^2}\right)^m > 1.1^m.
\]
Therefore, $R_m'(0) > 0$ for every $m \geq 2$ and, by convexity, $R_m(p)$ is increasing. Thus,
\[
R_m(p) \geq R_m(0) = A_m (1+2m) = \frac{5}{3\pi}\sqrt{\frac{2}{\pi}}\log\big(\pi(m+3/2)\big)\frac{2m+1}{m+3/2}.
\]
For $m \geq 2$, the right hand side is lower bounded by its value at $m=2$, which is greater than $1.4$.
\end{proof}

The final lemma in this section lies at the heart of the base case of the inductive argument from Step II (Section \ref{sec:large}).

\begin{lemma}\label{lm:xp}
For $x \in [0,1]$ and $p \in [0,2]$, let
\[
h(p,x) = \left(\frac{1+x}{2}\right)^{2-p}-\left(\frac{1-x}{2}\right)^{2-p} + x\left(\frac{3-x^2}{2}\right)^{-p/2}.
\]
Then for every $x \in (0,1)$, $p \mapsto h(p,x)$ is strictly concave and decreasing on $[0,2]$. In particular, $h(p,x) \leq h(0,x) = 2x$, for every $x \in [0,1]$, $p \in [0,2]$.
\end{lemma}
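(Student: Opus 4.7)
The identity $h(0,x)=\tfrac{(1+x)^2-(1-x)^2}{4}+x=x+x=2x$ is immediate. For $x\in\{0,1\}$ one has $h(p,x)=2x$ for every $p$, so it suffices to analyse $x\in(0,1)$.

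My plan is to prove two facts about $p\mapsto h(p,x)$ for each fixed $x\in(0,1)$:
\emph{(i) strict concavity in $p$}, i.e.\ $\partial_p^2 h(p,x)<0$ for $p\in[0,2]$; and
\emph{(ii) nonpositive initial slope}, $\partial_p h(0,x)\leq 0$.
Granted (i), the derivative $\partial_p h(\cdot,x)$ is strictly decreasing; combined with (ii) this forces $\partial_p h(p,x)<0$ for all $p\in(0,2]$, so $h(\cdot,x)$ is strictly decreasing on $[0,2]$. The ``in particular'' inequality $h(p,x)\leq h(0,x)=2x$ is then a consequence.

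With $a=(1+x)/2$, $b=(1-x)/2$, $c=(3-x^2)/2$, differentiation gives
\[
\partial_p^2 h(p,x)=a^{2-p}(\log a)^2-b^{2-p}(\log b)^2+\tfrac{x}{4}c^{-p/2}(\log c)^2.
\]
For (i), since $c\in[1,\tfrac{3}{2}]$ the correction term is bounded by $\tfrac{x}{4}(\log\tfrac{3}{2})^2$. The function $\psi_p(t)=t^{2-p}(\log t)^2$ on $(0,1)$ has a single maximum at $t=e^{-2/(2-p)}\leq e^{-1}$; using $a+b=1$ and $b<a$ one finds that $\psi_p(b)$ comfortably exceeds $\psi_p(a)$. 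Near $x=0$, Taylor expansion yields
\[
b^{2-p}(\log b)^2-a^{2-p}(\log a)^2=2^{p-1}x\log 2\,\bigl(2-(2-p)\log 2\bigr)+O(x^2),
\]
whose $O(x)$-coefficient is positive throughout $p\in[0,2]$ and uniformly dominates the $O(x)$-behaviour $\tfrac{x}{4}(3/2)^{-p/2}(\log\tfrac{3}{2})^2$ of the correction term. For $x$ bounded away from $0$, cruder estimates exploiting $|\log b|\gg|\log a|$ suffice.

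For (ii), let $g(x)=a^2\log a-b^2\log b+\tfrac{x}{2}\log c$; the claim becomes $g(x)\geq 0$ on $[0,1]$, with $g(0)=g(1)=0$. Computing
\[
g'(x)=a\log a+b\log b+\tfrac12+\tfrac12\log c-\tfrac{x^2}{2c},
\]
one finds $g'(0)=\tfrac12\bigl(1-\log\tfrac{8}{3}\bigr)>0$ and $g'(1)=0$. I would show that $g'$ has exactly one zero in $(0,1)$ by analysing $g''$ and verifying that it changes sign at most once on the interval; this makes $g$ unimodal on $[0,1]$ with vanishing endpoint values, hence $g\geq 0$. The main obstacle is the uniformity in claim (i) across the full rectangle $[0,2]\times(0,1)$, which requires stitching together the Taylor-based estimate near $x=0$ with direct bounds valid for $x$ bounded away from $0$, while balancing the three competing terms in $\partial_p^2 h$.
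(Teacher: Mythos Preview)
Your overall strategy---prove strict concavity of $p\mapsto h(p,x)$, then show $\partial_p h(0,x)\leq 0$, and combine---is exactly the paper's. Part (ii) is essentially the paper's argument as well: the paper computes up to $g'''$, finds a single sign change of $g''$, hence of $g'$, and concludes unimodality; your sketch is the same plan with one fewer derivative written out.

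The genuine gap is in part (i). You have only handled a neighbourhood of $x=0$ via Taylor expansion and then asserted that ``cruder estimates exploiting $|\log b|\gg |\log a|$ suffice'' away from $x=0$, while explicitly flagging the uniformity over $[0,2]\times(0,1)$ as the main obstacle. That stitching is not carried out, and it is delicate: as $p\uparrow 2$ the maximiser of $\psi_p(t)=t^{2-p}(\log t)^2$ drifts to $0$, so the claim that $\psi_p(b)$ ``comfortably exceeds'' $\psi_p(a)$ is not automatic when $b$ sits on the increasing branch of $\psi_p$, and the margin you need over the correction term $\frac{x}{4}c^{-p/2}(\log c)^2$ has to be tracked simultaneously in both variables. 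As written, (i) is a plausible programme, not a proof.

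The paper avoids this difficulty entirely with one observation you are missing: with your notation, $b^{p}\,\partial_p^2 h(p,x)$ has the form $A\alpha^p + B\beta^p - C$ with $A,B,C,\alpha,\beta>0$, hence is a strictly convex function of $p$. Therefore $\partial_p^2 h(p,x)<0$ for all $p\in(0,2)$ follows once you check $\partial_p^2 h\leq 0$ at the two endpoints $p=0$ and $p=2$. This collapses the two-parameter problem to two one-variable inequalities in $x$, each of which the paper dispatches with short elementary estimates (the $p=0$ case via the convex function $t\mapsto (\log t/(1-t))^2$, the $p=2$ case via $\log t\leq t-1$). Adopting this reduction would close your gap cleanly.
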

\begin{proof}
First we show concavity. Fix $x \in (0,1)$. We have,
\begin{align*}
\frac{\partial^2}{\partial p^2}h(p,x) = \left(\frac{1+x}{2}\right)^{2-p}\log^2\frac{1+x}{2} &-\left(\frac{1-x}{2}\right)^{2-p}\log^2\frac{1-x}{2} \\
&+ \frac{x}{4}\left(\frac{3-x^2}{2}\right)^{-p/2}\log^2\frac{3-x^2}{2}.
\end{align*}
Then $(\frac{1-x}{2})^{p}\frac{\partial^2}{\partial p^2}h(p,x)$ is a strictly convex function of $p$ as being of the form $Aa^p+Bb^p-C$ with positive $a, A, b, B, C$. Therefore, in order to show that $\frac{\partial^2}{\partial p^2}h(p,x)$ is negative for $p \in (0,2)$, it suffices to check that it is nonpositive at the endpoints $p = 0$ and $p = 2$.

At $p = 0$, using $0 \leq \log(1+t) \leq t$, $t \geq 0$, we have
\begin{align*}
\frac{\partial^2}{\partial p^2}h(p,x)|_{p=0} &= \left(\frac{1+x}{2}\right)^{2}\log^2\frac{1+x}{2} -\left(\frac{1-x}{2}\right)^{2}\log^2\frac{1-x}{2} + \frac{x}{4}\log^2\frac{3-x^2}{2} \\
&\leq \left(\frac{1+x}{2}\right)^{2}\log^2\frac{1+x}{2} -\left(\frac{1-x}{2}\right)^{2}\log^2\frac{1-x}{2} + \frac{x}{4}\left(\frac{1-x^2}{2}\right)^2.
\end{align*}
Let $f(t) = \left(\frac{\log t}{1-t}\right)^2$, $t \in (0,1)$. With $a = \frac{1-x}{2}$ and $b = \frac{1+x}{2}$, then the right hand side can be written as $(ab)^2\big(f(b) - f(a) + b-a\big)$. Note that the power-series expansion of $f(1-t) = \left(\frac{\log (1-t)}{t}\right)^2 = \left(\sum_{k=0}^\infty \frac{t^k}{k+1}\right)^2$ has all the coefficients positive. In particular, $f$ is convex on $(0,1)$. Moreover, a direct computation shows that $\lim_{t \to 1-} f'(t) = -1$. Thus, $f(b) - f(a) \leq f'(b)(b-a) \leq -(b-a)$, which gives $\frac{\partial^2}{\partial p^2}h(p,x)|_{p=0} \leq 0$, as desired.

At $p = 2$, using $\log^2\frac{3-x^2}{2} < 1$ and $3-x^2 > 2$, we have
\begin{align*}
\frac{\partial^2}{\partial p^2}h(p,x)|_{p=2} &= \log^2\frac{1+x}{2} -\log^2\frac{1-x}{2} + \frac{x}{2(3-x^2)}\log^2\frac{3-x^2}{2} \\
&< \log^2\frac{1+x}{2} -\log^2\frac{1-x}{2} + \frac{x}{4}.
\end{align*}
Note that the right hand side at $x=0$ is $0$, so it suffices to show that it is decreasing in $x$. The derivative of the right hand side equals $\frac{2}{1+x}\log\frac{1+x}{2} + \frac{2}{1-x}\log\frac{1-x}{2} + \frac{1}{4}$. With the aid of $\log t \leq t-1$, $t > 0$, we upper bound this by $\frac{9}{4} -2\left(\frac{1}{1+x}+\frac{1}{1-x}\right) = 4\left(\frac{9}{16} - \frac{1}{1-x^2}\right) < 0$. Thus, $\frac{\partial^2}{\partial p^2}h(p,x)|_{p=2} < 0$, as desired. This finishes the proof of the concavity of $p \mapsto h(p,x)$.

To show that $p \mapsto h(p,x)$ is decreasing on $[0,2]$, since it is concave, it suffices to show that $\frac{\partial}{\partial p}h(p,x)|_{p=0} \leq 0$. We let
\[
g(x) = \frac{\partial}{\partial p}h(p,x)|_{p=0} = -\left(\frac{1+x}{2}\right)^{2}\log\frac{1+x}{2} +\left(\frac{1-x}{2}\right)^{2}\log\frac{1-x}{2} - \frac{x}{2}\log\frac{3-x^2}{2}.
\]
The rest of the argument is a tedious analysis of the derivatives of $g$, which we only sketch. Since $g'''(x) = \frac{2x^2(x^4-14x^2+9)}{(1-x^2)(3-x^2)^3}$ and $x^4-14x^2+9$ on $[0,1]$ changes sign only once from positive to negative, we get that $g''(x)$ on $(0,1)$ is first increasing and then decreasing. Moreover, $g''(0+) = 0$ and $g''(1-) = -\infty$. Thus $g''(x)$ on $(0,1)$ changes sign only once from positive to negative. Therefore, $g'(x)$ on $(0,1)$ is first increasing and then decreasing. Since $g'(0+) = -\frac12 + \frac12\log\frac83 < 0$ and $g'(1-) = 0$, we infer that $g'(x)$ on $(0,1)$ changes sign only once from negative to positive. Thus $g(x)$ on $(0,1)$ first decreases and then increases. Since $g(0+) = 0 = g(1-)$, we get that $g(x) < 0$ on $(0,1)$, which finishes the proof.
\end{proof}

\section{Proofs}

\subsection{Fourier-analytic formula}

The following important Fourier-analytic formula for negative moments is the starting point of our proof.

\begin{lemma}[Lemma 3 in \cite{GF}]\label{lm:formula-mom}
For a random vector $X$ in $\R^d$ and $p \in (0,d)$, we have
\[
\E\|X\|^{-p} = b_{p,d}\int_{\R^d}\phi_X(t)\|t\|^{p-d} \dd t,
\]
provided that the right hand side integral exists, where $\phi_X(t) = \E e^{i\scal{t}{X}}$ is the characteristic function of $X$, $\|\cdot\|$ is the Euclidean norm on $\R^d$ and $b_{p,d} = 2^{-p}\pi^{-d/2}\frac{\Gamma\left((d-p)/2\right)}{\Gamma(p/2)}$.
\end{lemma}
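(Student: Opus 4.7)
The plan is to combine two classical ingredients: the gamma-integral representation of a negative power, and the Fourier self-duality of Gaussians. Both are elementary, and the result essentially falls out after a single change of variables.

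First, I would write
\[
\|x\|^{-p} = \frac{1}{\Gamma(p/2)}\int_0^\infty u^{p/2-1}e^{-u\|x\|^2}\dd u,
\]
which follows from the definition of the gamma function after the substitution $v = u\|x\|^2$. Taking expectations, Tonelli gives
\[
\E\|X\|^{-p} = \frac{1}{\Gamma(p/2)}\int_0^\infty u^{p/2-1}\E e^{-u\|X\|^2}\dd u.
\]
Next, Fourier inversion applied to a centred Gaussian in $\R^d$ yields
\[
e^{-u\|x\|^2} = \frac{1}{(4\pi u)^{d/2}}\int_{\R^d} e^{-\|t\|^2/(4u)}e^{i\scal{t}{x}}\dd t,
\]
so taking expectation in $X$ and interchanging with the $t$-integral (legitimate thanks to the Gaussian factor and $|\phi_X|\leq 1$) gives
\[
\E e^{-u\|X\|^2} = \frac{1}{(4\pi u)^{d/2}}\int_{\R^d} \phi_X(t)e^{-\|t\|^2/(4u)}\dd t.
\]

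Substituting this into the gamma representation and swapping the order of integration leaves
\[
\E\|X\|^{-p} = \frac{1}{\Gamma(p/2)(4\pi)^{d/2}}\int_{\R^d} \phi_X(t)\left[\int_0^\infty u^{(p-d)/2-1}e^{-\|t\|^2/(4u)}\dd u\right]\dd t.
\]
The inner integral is another gamma integral: the substitution $v=\|t\|^2/(4u)$ turns it into
\[
\left(\frac{\|t\|^2}{4}\right)^{(p-d)/2}\int_0^\infty v^{(d-p)/2-1}e^{-v}\dd v = 2^{d-p}\|t\|^{p-d}\,\Gamma\!\left(\frac{d-p}{2}\right),
\]
where the gamma function is evaluated at a positive argument precisely because $p\in(0,d)$. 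Collecting the constants one finds $\frac{2^{d-p}\Gamma((d-p)/2)}{(4\pi)^{d/2}\Gamma(p/2)} = 2^{-p}\pi^{-d/2}\frac{\Gamma((d-p)/2)}{\Gamma(p/2)} = b_{p,d}$, which is the claimed identity.

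The one step requiring care is the outer Fubini swap between the $u$- and $t$-integrals. After bringing in the absolute value, the double integral has the form $\int_{\R^d}|\phi_X(t)|\cdot 2^{d-p}\|t\|^{p-d}\Gamma((d-p)/2)\dd t$, which is finite exactly under the hypothesis that the right-hand side of the lemma is absolutely convergent; so that hypothesis is doing the only nontrivial work. Everything else is routine computation with gamma integrals and Gaussian Fourier transforms.
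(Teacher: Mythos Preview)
Your proof is correct and is the standard derivation of this identity via the gamma-integral representation of $\|x\|^{-p}$ combined with the Gaussian Fourier transform. The paper itself does not supply a proof of this lemma at all: it is quoted as Lemma~3 of \cite{GF} and used as a black box, so there is nothing to compare your argument against within this paper. One small point worth flagging: your Fubini justification assumes absolute convergence of $\int_{\R^d}|\phi_X(t)|\,\|t\|^{p-d}\,\dd t$, which is slightly stronger than the literal hypothesis ``the right-hand side integral exists''; in practice (and in every application in this paper, where $\phi_X$ is a product of real sinc functions) the two coincide, so this is not a substantive gap.
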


Using this formula, we have
\begin{equation}\label{eq:formula-mom}
\E\left|\sum_{k=1}^n a_kU_k\right|^{-p} = b_{p,1}\int_{\R}\left(\prod_{k=1}^n\frac{\sin(a_kt)}{a_kt}\right)|t|^{p-1} \dd t = 2b_{p,1}\int_0^\infty \left(\prod_{k=1}^n\frac{\sin(a_kt)}{a_kt}\right)t^{p-1} \dd t.
\end{equation}

The proof proceeds using completely different arguments depending on whether there is a \emph{large} weight $a_k$ or not.

\subsection{All weights are small}\label{sec:small}

Our goal here is the following special case of Theorem \ref{thm:khin-unif}.

\begin{theorem}\label{thm:all-small}
For every $p \in (0,1)$, every $n \geq 1$ and every reals $a_1, \ldots, a_n$ such that $\max_{k \leq n} |a_k| \leq \frac{1}{\sqrt{2}}(\sum_{k=1}^n a_k^2)^{1/2}$, inequality \eqref{eq:khin-unif} holds.
\end{theorem}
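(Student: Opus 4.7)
The plan is to exploit the Fourier-analytic formula \eqref{eq:formula-mom} together with the integral inequality \eqref{eq:ball-int}, following the classical Ball--Nazarov--Podkorytov scheme. By homogeneity I will normalise $\sum_{k=1}^n a_k^2 = 1$; using that $u\mapsto \sin(au)/(au)$ is even in $a$, I will also assume $a_k>0$. The hypothesis then becomes $a_k^2 \leq 1/2$ for every $k$, equivalently $1/a_k^2 \geq 2$, and the goal reduces to proving $\E|\sum a_k U_k|^{-p}\leq C_p$. Starting from \eqref{eq:formula-mom} and pulling absolute values inside, I obtain
\[
\E\left|\sum_{k=1}^n a_kU_k\right|^{-p}\leq 2b_{p,1}\int_0^\infty \prod_{k=1}^n\left|\frac{\sin(a_kt)}{a_kt}\right|t^{p-1}\dd t.
\]

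The key manoeuvre is Ball's Hölder trick. I will use $\sum a_k^2=1$ to split $t^{p-1}=\prod_k t^{a_k^2(p-1)}$ and then apply the generalised Hölder inequality with conjugate exponents $q_k=1/a_k^2\geq 2$, yielding
\[
\int_0^\infty \prod_k \left|\frac{\sin(a_kt)}{a_kt}\right| t^{p-1}\dd t \leq \prod_k \left(\int_0^\infty \left|\frac{\sin(a_kt)}{a_kt}\right|^{1/a_k^2} t^{p-1}\dd t\right)^{a_k^2}.
\]
Substituting $u=a_kt$ in each inner integral produces a Jacobian factor $a_k^{-p}$ and converts the integrand to $|\sin u/u|^{1/a_k^2}u^{p-1}$. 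At this point I invoke \eqref{eq:ball-int} with $s=1/a_k^2\geq 2$. The crucial miracle is that the prefactor $s^{p/2}$ on the left of \eqref{eq:ball-int} equals precisely $a_k^{-p}$, cancelling the Jacobian and giving the uniform bound
\[
a_k^{-p}\int_0^\infty \left|\frac{\sin u}{u}\right|^{1/a_k^2} u^{p-1}\dd u \leq 2^{p-1}\sqrt{\pi}\,\frac{\Gamma(p/2)}{\Gamma((1-p)/2)}\,C_p,
\]
which is \emph{independent of} $a_k$.

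Raising this bound to the power $a_k^2$ and multiplying over $k$, the identity $\sum a_k^2=1$ makes the product collapse to the same single bound. Finally, multiplying by $2b_{p,1}=2^{1-p}\pi^{-1/2}\Gamma((1-p)/2)/\Gamma(p/2)$, all Gamma functions and the factor $2^{p-1}\sqrt{\pi}$ cancel exactly, leaving $\E|\sum a_k U_k|^{-p}\leq C_p$, as required. The assumption $a_k^2\leq 1/2$ is used only to guarantee $s=1/a_k^2\geq 2$, matching the range of validity of \eqref{eq:ball-int}; this is precisely why the ``small weights'' hypothesis is the natural separator between Steps I and II. The genuine analytic difficulty of the whole argument is therefore concentrated entirely in \eqref{eq:ball-int}, which must simultaneously reflect both extremising configurations (the two-term one giving $c_2(p)$ and the equal-weights/CLT one giving $c_\infty(p)$), and that is where I expect the main obstacle to be.
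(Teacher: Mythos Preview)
Your proposal is correct and follows essentially the same route as the paper: normalise, apply H\"older with exponents $1/a_k^2$, change variables, and reduce everything to the integral inequality \eqref{eq:ball-int}. The paper packages this last step as the pair of Lemmas~\ref{lm:psi-large-p} and~\ref{lm:psi-small-p} (via the function $\Psi_p(s)=s^{p/2}\int_0^\infty|\sin t/t|^s t^{p-1}\,\dd t$), whose conjunction is exactly \eqref{eq:ball-int}, and then proves those lemmas with the Nazarov--Podkorytov distribution-function method; you are right that this is where all the work lies.
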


For the proof, we can assume that $\sum_{k=1}^n a_k^2 = 1$ and by symmetry, additionally, that each $a_k$ is positive. Thus in this case $0 < a_k \leq \frac{1}{\sqrt{2}}$ for every $k$. Recall \eqref{eq:formula-mom}. By H\"older's inequality, since $\sum a_k^2 = 1$,
\begin{align}
\notag\int_0^\infty \left(\prod_{k=1}^n\frac{\sin(a_kt)}{a_kt}\right)t^{p-1} \dd t &\leq \prod_{k=1}^n \left(\int_0^\infty \left|\frac{\sin(a_kt)}{a_kt}\right|^{1/a_k^2}t^{p-1}\dd t\right)^{a_k^2}\\
&= \prod_{k=1}^n\left(\Psi_p(1/a_k^2)\right)^{a_k^2},\label{eq:holder}
\end{align}
where we define
\begin{equation}\label{eq:def-psi}
\Psi_p(s) = \int_0^\infty \left|\frac{\sin (t/\sqrt{s})}{t/\sqrt{s}}\right|^st^{p-1}\dd t.
\end{equation}
The next step is to maximize $\Psi_p(s)$ over $s \geq 2$. The answer varies depending on the value of $p$ and is given by either $s=2$ or $s \to \infty$.

\begin{lemma}\label{lm:psi-large-p}
Let $p \in (p_0,1)$. For every $s \geq 2$, we have $\Psi_p(s) \leq \Psi_p(2)$. Moreover, $\Psi_p(2) = (2b_{p,1})^{-1}c_2(p)$.
\end{lemma}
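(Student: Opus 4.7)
My plan has two parts, matching the two claims of the lemma.

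The identity $\Psi_p(2)=(2b_{p,1})^{-1}c_2(p)$ is a direct calculation: applying the Fourier-analytic formula \eqref{eq:formula-mom} with $n=2$ and $a_1=a_2=1/\sqrt{2}$ yields
\[
c_2(p)=\E\Big|\tfrac{U_1+U_2}{\sqrt{2}}\Big|^{-p}=2b_{p,1}\int_0^\infty\Big(\tfrac{\sin(t/\sqrt{2})}{t/\sqrt{2}}\Big)^{2}t^{p-1}\,\dd t=2b_{p,1}\,\Psi_p(2),
\]
where the first equality is the defining one from \eqref{eq:consts}.

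For the inequality $\Psi_p(s)\leq\Psi_p(2)$, I would first substitute $u=t/\sqrt{s}$ in \eqref{eq:def-psi} to rewrite the target as
\[
s^{p/2}\int_0^\infty\Big|\tfrac{\sin u}{u}\Big|^{s}u^{p-1}\,\dd u\leq\Psi_p(2),
\]
then split at $u=\pi$ and handle the head $(0,\pi)$ and tail $[\pi,\infty)$ separately, in the spirit of \cite{NP}. For the tail, I decompose into the intervals $[m\pi,(m+1)\pi]$, use $|\sin u/u|\leq 1/u$ to bound $|\sin u/u|^s\leq (m\pi)^{-s}$ on the $m$-th piece, and sum the resulting series; after pulling $s^{p/2}$ back inside, the resulting expression is exactly the one controlled by Lemma \ref{lm:F/G}, whose conclusion $R_m(p)>1$ is precisely what certifies that this tail contribution fits under the corresponding piece of $\Psi_p(2)$. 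For the head, I apply a Nazarov--Podkorytov-style single-crossing comparison: replace $|\sin u/u|$ by an auxiliary function $\phi(u)$ whose $s$-th power integrates against $u^{p-1}$ in closed form, chosen so that $\sin u/u$ and $\phi$ cross exactly once on $(0,\pi)$ at a point $t_0$, which by Lemma \ref{lm:t0} satisfies $t_0>2$. The bound $|\cos u-\sin u/u|<6/5$ of Lemma \ref{lm:cos-sin} supplies the quantitative control needed to set up this comparison.

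Combining the two estimates produces a closed-form upper bound $B(p,s)$. A monotonicity-in-$s$ argument reduces matters to $B(p,2)\leq\Psi_p(2)$. Writing $\Psi_p(2)=\frac{2^{p/2}\sqrt{\pi}\,\Gamma(p/2)}{(1-p)(2-p)\Gamma((1-p)/2)}$ via the identity proved above and invoking the lower bound $(1-p)(2-p)\Gamma((1-p)/2)\geq 2^{(3-p)/2}$ of Lemma \ref{lm:gamma}, the final check becomes an elementary numerical verification on $p\in(p_0,1)$; the condition $c_2(p)\geq c_\infty(p)$ that defines $(p_0,1)$ is precisely what makes it succeed.

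The main obstacle is the head estimate, specifically its \emph{sharpness} at $s=2$: since equality in $\Psi_p(s)\leq\Psi_p(2)$ is attained there, a crude Gaussian comparison $\sin u/u\leq e^{-u^2/6}$ (which would be tight only as $s\to\infty$ and corresponds to $c_\infty(p)$) is insufficient throughout $(p_0,1)$, particularly near $p=p_0$ where the gap between $c_2(p)$ and $c_\infty(p)$ vanishes. The Nazarov--Podkorytov crossing technique, calibrated by Lemma \ref{lm:t0}, is what allows the comparison to be tight at $s=2$ while remaining valid for every $s\geq 2$.
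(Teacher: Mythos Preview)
Your identity computation for $\Psi_p(2)$ is fine. The inequality argument, however, has a structural flaw.

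You propose to split $\int_0^\infty$ at $u=\pi$ and bound head and tail separately. But the target inequality $\Psi_p(s)\leq\Psi_p(2)$ is an \emph{equality} at $s=2$, so whatever closed-form upper bound $B(p,s)$ you assemble must satisfy $B(p,2)=\Psi_p(2)$ exactly. Your tail bound $|\sin u/u|^s\leq (m\pi)^{-s}$ on $[m\pi,(m+1)\pi]$ is strict, so already at $s=2$ it overshoots $\int_\pi^\infty(\sin u/u)^2 u^{p-1}\,\dd u$; calibrating the head via Nazarov--Podkorytov to match $\int_0^\pi(\sin u/u)^2 u^{p-1}\,\dd u$ then forces $B(p,2)>\Psi_p(2)$, and the final check you describe must fail. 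You even flag sharpness at $s=2$ as the obstacle, but a head/tail split with a lossy tail cannot meet it.

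Relatedly, the technical lemmas do not play the roles you assign them. In the paper the Nazarov--Podkorytov comparison is applied \emph{globally}, not on $(0,\pi)$: one compares $f(t)=|\sin t/t|$ with a Gaussian $g_p(t)=e^{-\sigma_p^2 t^2}$, where $\sigma_p$ is fixed by $\int_0^\infty f^2\,\dd\mu=\int_0^\infty g_p^2\,\dd\mu$ (so equality at $s=2$ is built in), and then one proves the distribution functions $F,G_p$ cross exactly once. The hypothesis $p\in(p_0,1)$ is equivalent to $\sigma_p^2<1/6$, which gives $F<G_p$ on $(y_1,1)$. On $(0,y_1)$ one shows $|F'|>|G_p'|$; Lemmas \ref{lm:cos-sin} and \ref{lm:t0} together lower-bound $|F'(y)|$ (the point $t_0$ in Lemma \ref{lm:t0} is a preimage of $y$ under $f$, not a crossing with an auxiliary $\phi$), Lemma \ref{lm:gamma} lower-bounds $\sigma_p^p$ which enters $|G_p'|$, and Lemma \ref{lm:F/G} is exactly the statement that the resulting ratio exceeds $1$. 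The logarithms in $R_m(p)$ come from inverting the Gaussian $g_p$; they have no counterpart in your polynomial tail estimate, so Lemma \ref{lm:F/G} does not control the quantity you wrote down.
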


\begin{lemma}\label{lm:psi-small-p}
Let $p \in (0,p_0)$. For every $s \geq 2$, we have $\Psi_p(s) \leq \lim_{s'\to\infty} \Psi_p(s').$ Moreover, $\lim_{s'\to\infty} \Psi_p(s') = (2b_{p,1})^{-1}c_\infty(p)$.
\end{lemma}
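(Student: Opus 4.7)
The plan is first to compute $\lim_{s\to\infty}\Psi_p(s)$ via a change of variables and dominated convergence, and then to prove the inequality by splitting the integration at $u=\pi$. The substitution $u = t/\sqrt{s}$ in \eqref{eq:def-psi} gives
\[
\Psi_p(s) = s^{p/2}\int_0^\infty \left|\frac{\sin u}{u}\right|^s u^{p-1}\dd u,
\]
and the further rescaling $v = u\sqrt{s/3}$ yields
\[
\Psi_p(s) = 3^{p/2}\int_0^\infty \left|\frac{\sin(v\sqrt{3/s})}{v\sqrt{3/s}}\right|^s v^{p-1}\dd v.
\]
Since $(1-x^2/6+O(x^4))^s \to e^{-sx^2/6}$ with $sx^2/6 = v^2/2$ when $x = v\sqrt{3/s}$, the integrand tends pointwise to $e^{-v^2/2}v^{p-1}$. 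A uniform dominator is obtained from the pointwise inequality $|\sin x/x| \leq e^{-x^2/6}$ on $[0,\pi]$ (itself a consequence of $(\log(\sin x/x) + x^2/6)' = \cot x - 1/x + x/3 = -x^3/45 - \cdots < 0$ on $(0,\pi)$) and $|\sin x/x| \leq 1/x$ beyond. Dominated convergence gives
\[
\lim_{s\to\infty}\Psi_p(s) = 3^{p/2}\int_0^\infty e^{-v^2/2}v^{p-1}\dd v = 3^{p/2}\cdot 2^{p/2-1}\Gamma(p/2),
\]
which equals $(2b_{p,1})^{-1}c_\infty(p)$ by direct algebraic simplification using \eqref{eq:consts} and the formula $(2b_{p,1})^{-1} = 2^{p-1}\pi^{1/2}\Gamma(p/2)/\Gamma((1-p)/2)$.

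For the inequality, I split $\Psi_p(s) = J_1(s) + J_2(s)$, with $J_1$ the part over $u \in [0,\pi]$ and $J_2$ the tail. On $[0,\pi]$ the bound $|\sin u/u|^s \leq e^{-su^2/6}$, followed by $v=u\sqrt{s/3}$, gives
\[
J_1(s) \leq 3^{p/2}\int_0^{\pi\sqrt{s/3}}e^{-v^2/2}v^{p-1}\dd v = \lim_{s'\to\infty}\Psi_p(s') - T(s),
\]
where $T(s) := 3^{p/2}\int_{\pi\sqrt{s/3}}^\infty e^{-v^2/2}v^{p-1}\dd v$. The lemma thus reduces to proving the tail comparison $J_2(s) \leq T(s)$ for $s \geq 2$, $p \in (0,p_0)$. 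I decompose $[\pi,\infty) = \bigcup_{k\geq 1}[k\pi,(k+1)\pi]$ and use the peak bound $|\sin u/u| \leq \frac{2}{(2k+1)\pi}$ on each interval; combined with $\int_{k\pi}^{(k+1)\pi}u^{p-1}\dd u \leq \pi^p((k+1)^p - k^p)/p$, this expresses $J_2(s)$ as a geometric-polynomial series. After truncating at a suitable index $m = m(s)$ (with the terms $k > m$ being absolutely negligible via an easy dominant-term estimate) and applying the Mills-ratio type lower bound $T(s) \gtrsim (\pi\sqrt{s/3})^{p-2}e^{-\pi^2 s/6}$, the required bound takes exactly the shape captured by $R_m(p) > 1$ of Lemma \ref{lm:F/G} — the factors $2^p + 2\pi^p\sum_{k=1}^m k^p$, $1/(\pi(m+3/2))$, and $(\log(\pi(m+3/2)))^{1-p/2}$ match, respectively, the truncated sum from $J_2$, the Mills-ratio prefactor, and the matching of $\pi\sqrt{s/3}$ with $\pi(m+3/2)$ up to a logarithmic rescaling. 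Lemma \ref{lm:gamma} supplies the final numerical cleanup at the level of the prefactor $\Gamma(p/2)$ versus $\Gamma((1-p)/2)$.

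The main obstacle is that the inequality $\Psi_p(s) \leq \lim_{s'\to\infty}\Psi_p(s')$ is not only sharp as $s\to\infty$ but, by Lemma \ref{lm:c2-cinf}, becomes asymptotically sharp also at $s=2$ as $p\nearrow p_0$ (since $c_2(p_0)=c_\infty(p_0)$). Consequently the "budget" $T(s)$ into which the sinc tail $J_2(s)$ must fit is extremely thin, and the exponential decay rate $\log(3\pi/2)$ of $J_2$ is only slightly slower than the rate $\pi^2/6$ of $T$ (the difference being about $0.09$), so the comparison is genuinely quantitative and requires summing enough terms of the series — this is precisely what the logarithmic correction and the numerical factor $5/3$ in Lemma \ref{lm:F/G} are calibrated to achieve.
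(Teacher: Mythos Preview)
Your limit computation is fine and essentially matches the paper. But your approach to the inequality via the reduction $J_2(s) \leq T(s)$ is fatally flawed: this inequality is simply \emph{false} for large $s$. You yourself note that $J_2(s)$ decays at exponential rate about $\log(3\pi/2) \approx 1.55$ (in fact the true rate is $-\log y_1 \approx 1.53$, where $y_1 = \max_{[\pi,2\pi]}|\sin u/u| \approx 0.217$), while $T(s)$ decays at rate $\pi^2/6 \approx 1.645$. Slower decay means $J_2$ is eventually \emph{larger}: a Laplace-type lower bound near $u_1 \approx 4.49$ gives $J_2(s) \gtrsim s^{(p-1)/2}y_1^s$, whereas $T(s) \asymp s^{(p-2)/2}e^{-\pi^2 s/6}$, so $J_2(s)/T(s) \gtrsim s^{1/2}(y_1 e^{\pi^2/6})^s \to \infty$ since $y_1 e^{\pi^2/6} \approx 1.12 > 1$. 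Hence no amount of careful estimation can salvage $J_2 \leq T$. (Incidentally, your peak bound $|\sin u/u| \leq 2/((2k+1)\pi)$ on $[k\pi,(k+1)\pi]$ is also incorrect --- the true maximum, attained where $\tan u = u$, exceeds $2/((2k+1)\pi)$ --- but that is a secondary issue.)

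The reason your direct splitting cannot work is exactly the obstacle you diagnose in your last paragraph: the inequality $\Psi_p(s) \leq \lim_{s'\to\infty}\Psi_p(s')$ is tight at $s=\infty$ and, as $p\to p_0$, also at $s=2$. Your bound $J_1 \leq \lim\Psi_p - T$ gives away a nontrivial amount already at $s=2$, and you cannot recoup it from a tail that is asymptotically too large. The paper avoids this by a \emph{distribution function} argument (the Nazarov--Podkorytov lemma, Lemma~\ref{lm:NP.distr}): one shows that the distribution functions $F,G$ of $|\sin t/t|$ and $e^{-t^2/6}$ (with respect to $t^{p-1}\dd t$) have a difference $F-G$ that changes sign at most once on $(0,1)$, and then the NP lemma propagates the known inequality at $s=2$ (which is precisely $c_2(p) \leq c_\infty(p)$, i.e.\ Lemma~\ref{lm:c2-cinf}) to all $s \geq 2$. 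Lemma~\ref{lm:F/G} enters there to prove $|F'(y)| > |G'(y)|$ on each level interval $(y_{m+1},y_m)$ --- a statement about derivatives of distribution functions in the $y$-variable, not about tail integrals in $s$; your attempted matching of its factors to the $J_2$--$T$ comparison does not correspond to any valid identity.
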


Taking these lemmas for granted for a moment, we can finish the proof as follows. Suppose that $p \in (p_0,1)$. Then combining \eqref{eq:formula-mom}, \eqref{eq:holder} and Lemma \ref{lm:psi-large-p}, we have
\[
\E\left|\sum_{k=1}^n a_kU_k\right|^{-p} \leq 2b_{p,1}\prod_{k=1}^n \Big((2b_{p,1})^{-1}c_2(p)\Big)^{a_k^2} = c_2(p),
\]
obtaining ``half'' of \eqref{eq:khin-unif}, that is when $C_p = c_2(p)$. Of course, we proceed identically for $p \in (0,p_0)$ using Lemma \ref{lm:psi-small-p} to obtain the other half. Therefore, to finish the proof of Theorem \ref{thm:all-small}, it remains to prove Lemmas \ref{lm:psi-large-p} and \ref{lm:psi-small-p}.

\begin{proof}[Proof of Lemma \ref{lm:psi-large-p}]
Recalling \eqref{eq:def-psi}, the definition of $\Psi_p$, by a change of variables, the inequality $\Psi_p(s) \leq \Psi_p(2)$ is equivalent to
\[
\int_0^\infty \left|\frac{\sin t}{t}\right|^st^{p-1}\dd t \leq s^{-p/2}\Psi_p(2),
\]
which can be thought of as a Ball's integral inequality with the weight $t^{p-1}$ (Ball's inequality corresponds to the case $p = 1$, see \cite{Ball, NP}).
For the proof, we rewrite the right hand side as $\int_0^\infty g_p(t)^pt^{p-1}\dd t$ with a Gaussian function
\[
g_p(t) = \exp(-\sigma_p^2 t^2)
\]
for $\sigma_p>0$ defined such that for every $s$,
\[
s^{-p/2}\sigma_p^{-p}\frac{\Gamma(p/2)}{2} = \int_0^\infty g_p(t)^st^{p-1}\dd t = s^{-p/2}\Psi_p(2).
\]
We emphasize that this identity holds for every $s$ with $\sigma_p$ depending only on $p$ and that this is why the Gaussian function $g_p$ is a good function to compare $\frac{\sin t}{t}$ with. Our goal is then to show that
\begin{equation}\label{eq:int-goal}
\int_0^\infty \left|\frac{\sin t}{t}\right|^st^{p-1}\dd t \leq \int_0^\infty g_p(t)^st^{p-1}\dd t, \qquad s \geq 2,
\end{equation}
and $\sigma_p$ in the definition of $g_p$ is such that there is equality for $s = 2$ in \eqref{eq:int-goal}. We remark that the equality for $s=2$ is equivalent to
\[
\E\left|U_1+U_2\right|^{-p} = \E|2\sigma_pZ|^{-p},
\]
where $U_1, U_2$ are i.i.d. uniform $[-1,1]$ random variables and $Z$ is a standard Gaussian random variable (because $\left(\frac{\sin t}{t}\right)^2$ is the characteristic function of $U_1+U_2$ and $g_p(t)^2$ is the characteristic function of $2\sigma_pZ$). This allows to explicitly compute $\sigma_p$,
\[
\sigma_p^p =2^{-p}\frac{\E|Z|^{-p}}{\E|U_1+U_2|^{-p}} = 2^{-1-p/2}\pi^{-1/2}(1-p)(2-p)\Gamma\left(\frac{1-p}{2}\right).
\]
To prove \eqref{eq:int-goal}, we use the following ``lemma on distribution functions'' from \cite{NP}. Recall that given a non-negative function $h: X\to [0,+\infty)$ on a measure space $(X,\mu)$ its \textit{distribution function} is the non-increasing function $H:(0,+\infty)\to [0,\infty)$ defined by
\[
H(y) := \mu(\{x\in X : h(x)>y\}).
\]
\begin{lemma}[\cite{NP}]\label{lm:NP.distr}
Let $f$ and $g$ be two non-negative measurable functions on a measure space $(X,\mu)$ and $F$, $G$ be the distribution functions of $f$ and $g$ respectively. If $F(y)$ and $G(y)$ are finite for every $y>0$ and there is some point $y_0$ such that $F(y)\gr G(y)$ for all $0 < y < y_0$ and $F(y)\ls G(y)$ for all $y>y_0$, then the function
\[
s\mapsto \frac{1}{sy_0^s}\int_X (f^s-g^s)\,d\mu
\]
is decreasing on the set $S=\{s>0 : f^s-g^s\in L^1(X,\mu)\}$.

In particular, if $\int_X (f^{s_0}-g^{s_0}) \,d\mu = 0$ for some $s_0>0$, then $\int_X (f^s-g^s) \,d\mu \ls 0$ for every $s\gr s_0$.
\end{lemma}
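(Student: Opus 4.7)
The plan is to apply the layer-cake formula to rewrite $\int_X(f^s-g^s)\,d\mu$ as a single integral in $y$ weighted by $F(y)-G(y)$, rescale by $y_0$, and then read off the monotonicity in $s$ from the joint sign behavior of $F-G$ and a monomial in $y/y_0$.

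First I would use the identity $h^s=s\int_0^\infty y^{s-1}\mathbf{1}_{\{h>y\}}\,dy$ applied to $f$ and $g$ together with Fubini to obtain, for every $s\in S$,
\[
\int_X(f^s-g^s)\,d\mu=s\int_0^\infty y^{s-1}\bigl(F(y)-G(y)\bigr)\,dy.
\]
Substituting $u=y/y_0$ gives
\[
\Phi(s):=\frac{1}{sy_0^s}\int_X(f^s-g^s)\,d\mu=\int_0^\infty u^{s-1}\phi(u)\,du,\qquad \phi(u):=F(uy_0)-G(uy_0),
\]
where by hypothesis $\phi\geq 0$ on $(0,1)$ and $\phi\leq 0$ on $(1,\infty)$. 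This is the key reduction: the sign change of $\phi$ now occurs precisely at $u=1$, which is also where $u^{s_2-1}-u^{s_1-1}$ changes sign.

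Next I would prove that $\Phi$ is non-increasing on $S$ by a two-point comparison rather than differentiation under the integral. For $s_1<s_2$ in $S$,
\[
\Phi(s_2)-\Phi(s_1)=\int_0^\infty\bigl(u^{s_2-1}-u^{s_1-1}\bigr)\phi(u)\,du,
\]
and the integrand is pointwise non-positive: writing $u^{s_2-1}-u^{s_1-1}=u^{s_1-1}\bigl(u^{s_2-s_1}-1\bigr)$, the factor $u^{s_2-s_1}-1$ is $\leq 0$ on $(0,1)$ (where $\phi\geq 0$) and $\geq 0$ on $(1,\infty)$ (where $\phi\leq 0$), so the product is $\leq 0$ in either region. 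Thus $\Phi(s_2)\leq\Phi(s_1)$. The ``in particular'' assertion is then immediate: if $\int_X(f^{s_0}-g^{s_0})\,d\mu=0$, then $\Phi(s_0)=0$, hence $\Phi(s)\leq 0$ for all $s\geq s_0$ in $S$, and multiplying by the positive quantity $sy_0^s$ gives $\int_X(f^s-g^s)\,d\mu\leq 0$.

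The main obstacle I expect is the careful justification of the layer-cake/Fubini step in the generality of the hypothesis: we are only told that $f^s-g^s\in L^1$, not that $f^s$ and $g^s$ are individually integrable (nor that $\int_0^\infty y^{s-1}F(y)\,dy$ is finite). To handle this I would split $X$ into $\{f>g\}$ and $\{f\leq g\}$ and apply the layer-cake representation to the non-negative functions $(f^s-g^s)_+$ and $(f^s-g^s)_-$ separately; integrability of $f^s-g^s$ and finiteness of $F,G$ keep all relevant quantities finite, while the sign-change assumption on $F-G$ at $y_0$ is exactly what lets the contributions from the two regions be assembled into a single, sign-coherent integral against $\phi$. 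A parallel bookkeeping splits the difference $\Phi(s_2)-\Phi(s_1)$ into its two regions $(0,1)$ and $(1,\infty)$, so that neither a Fubini swap nor a differentiation under the integral is needed beyond what is already controlled by the $L^1$ hypothesis defining $S$.
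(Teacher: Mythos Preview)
The paper does not give its own proof of this lemma; it is quoted verbatim from Nazarov--Podkorytov \cite{NP} and used as a black box. Your approach is exactly the standard one from \cite{NP}: the layer-cake identity $\int_X(f^s-g^s)\,d\mu=s\int_0^\infty y^{s-1}(F(y)-G(y))\,dy$, the rescaling $u=y/y_0$, and the pointwise sign check on $(u^{s_2-1}-u^{s_1-1})\phi(u)$. So there is nothing to compare against, and your outline is correct.

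One remark on the Fubini step you flag as the main obstacle: your proposed workaround (splitting $X$ into $\{f>g\}$ and $\{f\leq g\}$) is more involved than necessary. The pointwise identity
\[
f(x)^s-g(x)^s=s\int_0^\infty y^{s-1}\bigl(\mathbf{1}_{\{f(x)>y\}}-\mathbf{1}_{\{g(x)>y\}}\bigr)\,dy
\]
has the feature that the integrand on the right has constant sign in $y$ for each fixed $x$ (it is $\pm\mathbf{1}_{\{\min(f,g)<y<\max(f,g)\}}$), so integrating its absolute value in $y$ gives exactly $|f(x)^s-g(x)^s|$. Hence by Tonelli the iterated absolute integral over $X\times(0,\infty)$ equals $\int_X|f^s-g^s|\,d\mu<\infty$, and Fubini applies directly from the hypothesis $f^s-g^s\in L^1(\mu)$. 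No splitting of $X$ is needed.
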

Let $\mu$ be the Borel measure on $(0,+\infty)$ with $\dd \mu(t) = t^{p-1} \dd t$. Let $F$ and $G_p$ be the distribution functions respectively of
\[
f(t) = \left|\frac{\sin t}{t}\right| \qquad \text{and} \qquad g_p(t) = \exp(-\sigma_p^2t^2).
\]
By Lemma \ref{lm:NP.distr}, to establish the validity of \eqref{eq:int-goal} it suffices to show that the difference $F-G_p$ changes sign on $(0,1)$ exactly once (since both $f$ and $g_p$ are bounded by $1$, both $F$ and $G_p$ vanish on $[1,+\infty)$). Notice that for $t \in (0,\pi)$,
\[
f(t) = \prod_{k=1}^\infty \left(1-\frac{t^2}{\pi^2k^2}\right) \leq \prod_{k=1}^\infty e^{-\frac{t^2}{\pi^2k^2}} = e^{-t^2/6}.
\]
Moreover, thanks to Lemma \ref{lm:c2-cinf}, our assumption $p \in (p_0,1)$ is equivalent to $\E\left|\frac{Z}{\sqrt{3}}\right|^{-p} < \E\left|\frac{U_1+U_2}{\sqrt{2}}\right|^{-p}$ which in turn by the definition of $\sigma_p$ is equivalent to $\sigma_p^2 < \frac{1}{6}$. Thus, $f(t) \leq e^{-t^2/6} < e^{-\sigma_p^2t^2} = g_p(t)$ for $t \in (0,\pi)$. Consequently,
\begin{equation}\label{eq:FGp-large-y}
F(y) < G_p(y) \qquad  \text{for } y \in (y_1,1),
\end{equation}
where $y_m = \max_{t \in [m\pi, (m+1)\pi]} f(t)$, $m = 1, 2, \ldots$ is the decreasing sequence of successive maxima of $f$, as in \cite{NP}. Since $
\int_0^\infty 2y(F(y) - G_p(y)) \dd y = \int_0^\infty (f(t)^2 - g_p(t)^2) \dd\mu(t)$,
we have that $F-G_p$ changes its sign at least once on $(0,1)$. Therefore, to prove that this happens exactly once, it suffices to prove that $G_p - F$ is strictly increasing on $(0,y_1)$, and since $G_p'$ and $F'$ are negative, equivalently that $|F'| > |G_p'|$ on every interval $(y_m, y_{m+1})$, $m \geq 1$.

To this end, fix an integer $m \geq 1$ and $y \in (y_{m+1}, y_m)$. Note that there is one solution, call it $t_0=t_0(y)$, to the equation $f(t) = y$ on $(0,\pi)$ and for every $k = 1,\ldots, m$, there are two solutions $t_{k}^-=t_{k}^-(y)$ and $t_{k}^+=t_{k}^+(y)$ on $(k\pi,(k+1)\pi)$. We can then write
\begin{align*}
F(y) &= \int_0^\infty \1_{[f(t)>y]}(t)\,t^{p-1}\,dt\\
     &= \sum_{k=0}^m \int_{k\pi}^{(k+1)\pi}\1_{[f(t)>y]}(t)\,t^{p-1}\,dt = \int_0^{t_0} t^{p-1}\,dt + \sum_{k=1}^m \int_{t_k^-}^{t_k^+}t^{p-1}\,dt.
\end{align*}
Differentiating with respect to $y$ we get
\[
F'(y) = t_0^{p-1}t_0'+\sum_{k=1}^m (t_k^+)^{p-1}(t_k^+)' - (t_k^-)^{p-1}(t_k^-)',
\]
so that
\[
|F'(y)| = \sum_{\{t > 0: f(t) = y\}} \frac{t^{p-1}}{|f'(t)|}.
\]
With the aid of Lemma \ref{lm:cos-sin} we then have
\[
|F'(y)| \geq \frac{5}{6}\sum_{\{t > 0: f(t)=y\}} t^p.
\]
Since $t_{k}^\pm \geq k\pi$ for every $k\gr 1$ and, by Lemma \ref{lm:t0}, $t_0 > 2$ it follows that
\[
|F'(y)| \geq \frac{5}{6}\left(2^p + 2\pi^p\sum_{k=1}^m k^p\right).
\]
We remark that this estimate is valid for all $p \in (0,1)$.

For $G_p$, we have
\[
G_p(y) = \mu\left(\left(0, \sigma_p^{-1}\sqrt{-\log y}\right)\right) = \frac{(-\log y)^{p/2}}{p\sigma_p^p},
\]
thus
\[
\frac{1}{|G_p'(y)|} =2\sigma_p^py(-\log y)^{1-p/2}.
\]
Since $y(-\log y)^{1-p/2}$ is increasing if $y < e^{p/2-1}$ and $y_1 < \pi^{-1} < e^{-1} < e^{p/2-1}$, it is in particular increasing on $(0,y_1)$, so using $y > y_{m+1} \geq \left|\frac{\sin(\pi(m+3/2)}{\pi(m+3/2)}\right| = \frac{1}{\pi(m+3/2)}$, we have
\begin{equation}\label{eq:F/G}
\frac{|F'(y)|}{|G_p'(y)|} \geq \frac{5}{3}\sigma_p^p\frac{\Big(\log \big(\pi(m+3/2)\big)\Big)^{1-p/2}}{\pi(m+3/2)} \left(2^p + 2\pi^p\sum_{k=1}^m k^p\right).
\end{equation}
Note that this estimate holds for all $p \in (0,1)$.
Applying Lemma \ref{lm:gamma} to lower bound $\sigma_p^p$ by $\pi^{-1/2}2^{1/2-p}$ and then Lemma \ref{lm:F/G} to lower bound the whole expression by $1$ finishes the proof.
\end{proof}

\begin{proof}[Proof of Lemma \ref{lm:psi-small-p}]
Finding the limit
\begin{equation}\label{eq:lim-Psi}
\lim_{s \to \infty} \Psi_p(s) = (2b_{p,1})^{-1}c_\infty(p)
\end{equation}
is standard. For instance, if the limit is taken along integral even $s$, this follows from Lemma \ref{lm:formula-mom} combined with the central limit theorem. In general, a simple analytic argument goes as follows: letting $K_s(t) = \left|\frac{\sin (t/\sqrt{s})}{t/\sqrt{s}}\right|^s$, splitting the integration as
\[
\Psi_p(s) = \int_0^\infty K_s(t)t^{p-1}\dd t = \int_0^{\pi\sqrt{s}} K_s(t)t^{p-1}\dd t + \int_{\pi\sqrt{s}}^\infty K_s(t)t^{p-1}\dd t
\]
and using $K_s(t) \leq \left|\frac{\sqrt{s}}{t}\right|^s$, we see the second integral is bounded by $s^{s/2}\int_{\pi\sqrt{s}}^\infty t^{p-1-s} \dd t = \frac{\pi^ps^{p/2}}{(s-p)\pi^s}$ which goes to $0$ as $s\to\infty$. Since $K_s(t)\1_{[0,\pi\sqrt{s}]}(t) \leq e^{-t^2/6}$ and in fact pointwise $\lim_{s\to\infty} K_s(t) = \lim_{s\to\infty} \left|1-\frac{t^2}{6s} + o\left(\frac{t^2}{s}\right)\right|^s = e^{-t^2/6}$, we obtain \eqref{eq:lim-Psi} from the first integral by Lebesgue's dominated convergence theorem.

Fix $s \geq 2$. By a change of variables, $\Psi_p(s) \leq (2b_{p,1})^{-1}c_\infty(p)$ can be rewritten as
\begin{equation}\label{eq:int-ineq-psmall}
\int_0^\infty \left|\frac{\sin t}{t}\right|^s t^{p-1}\dd t \leq \int_0^\infty \exp(-st^2/6)t^{p-1}\dd t.
\end{equation}
From this point onwards, we repeat the proof of Lemma \ref{lm:psi-large-p} with $f(t) = \left|\frac{\sin t}{t}\right|$ and $g(t) = \exp(-t^2/6)$, so $g(t) = g_p(t)$ with $\sigma_p$ set to be constant, equal to $\frac{1}{\sqrt{6}}$. For $s=2$ inequality \eqref{eq:int-ineq-psmall} is equivalent to $c_2(p) \leq c_\infty(p)$ which holds true and is in fact a strict inequality for every $p \in (0,p_0)$ (Lemma \ref{lm:c2-cinf}). We next look at the sign changes of the difference $F-G$ of the distribution functions $F$, $G$ of $f$ and $g$, respectively. If there is no sign change, we are immediately done (in view of the identity $\int (f^s - g^s) \dd \mu = \int_0^\infty sy^{s-1}(F(y)-G(y)) \dd y$). Thus, in view of Lemma \ref{lm:NP.distr}, it remains to check that $F-G$ changes sign at most once. Since \eqref{eq:FGp-large-y} holds here as well (with $G_p$ replaced by $G$), as in Lemma \ref{lm:psi-large-p}, it suffices to check that $|F'| > |G'|$ on every interval $(y_m,y_{m+1})$, $m \geq 1$. As in the proof of Lemma \ref{lm:psi-large-p}, we have inequality \eqref{eq:F/G} with $\sigma_p^p$ replaced by $6^{-p/2}$. Since $6^{-p/2} > \pi^{-1/2}2^{1/2-p}$ for every $p \in (0,1)$, Lemma \ref{lm:F/G} allows to finish the proof.
\end{proof}

\subsection{There is a large weight}\label{sec:large}

We finish the proof of Theorem \ref{thm:khin-unif} by following the inductive approach from \cite{NP}. It crucially relies on strengthening the right hand side of \eqref{eq:khin-unif} to allow the induction on the number of summands $n$ to work. To this end, we define
\[
\phi_p(x) = (1+x)^{-p/2}
\]
and
\[
\Phi_p(x) = \begin{cases} \phi_p(x), & x \geq 1, \\ 2\phi_p(1)-\phi_p(2-x), & 0 \leq x \leq 1.\end{cases}
\]
By this construction, the graph of $\Phi_p(x)$ on $[0,1]$ is the graph of $\phi_p(x)$ on $[1,2]$ reflected about the point $(1,\phi_p(1))$. In particular, to the left of $x = 1$, $\Phi_p$ and $\phi_p$ share the common tangent line at $x = 1$. Consequently, $\Phi_p(x) \leq \phi_p(x)$ for every $x$. By homogeneity, \eqref{eq:khin-unif} is equivalent to
\[
\E\left|U_1 + \sum_{k=2}^n a_kU_k\right|^{-p} \leq C_p\phi_p\left(\sum_{k=2}^n a_k^2\right).
\]
We shall inductively show a strengthening. As it will be clear from the proof, it is natural to run the inductive argument for spherically symmetric random vectors $\xi_k$.

\begin{theorem}\label{thm:one-large}
For every $p \in (0,1)$, every $n \geq 2$ and every vectors $v_2, \ldots, v_n$ in $\R^3$, we have
\begin{equation}\label{eq:ind}
\E\left|\scal{e_1}{\xi_1} + \sum_{k=2}^n \scal{v_k}{\xi_k}\right|^{-p} \leq C_p\Phi_p\left(\sum_{k=2}^n \|v_k\|^2\right).
\end{equation}
\end{theorem}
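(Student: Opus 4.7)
My plan is to prove \eqref{eq:ind} by induction on $n$ after reducing to a scalar inequality. By rotational invariance of each $\xi_k$ and Archimedes' hat-box theorem (as used in Lemma \ref{lm:konig-identity}), $\scal{e_1}{\xi_1}$ is uniform on $[-1,1]$ and each $\scal{v_k}{\xi_k}$ is independently uniform on $[-\|v_k\|,\|v_k\|]$. Setting $r_k = \|v_k\|$ and letting $U_1,\ldots,U_n$ denote the corresponding i.i.d.\ uniform variables on $[-1,1]$, the target reduces to
\[
\E\bigl|U_1 + \textstyle\sum_{k=2}^n r_k U_k\bigr|^{-p} \leq C_p \Phi_p\bigl(\textstyle\sum_{k=2}^n r_k^2\bigr).
\]

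For the base case $n=2$, the density of $U_1 + rU_2$ is piecewise linear; for $r = r_2 \in [0,1]$, explicit integration yields
\[
\E|U_1 + r U_2|^{-p} = \frac{(1+r)^{2-p}-(1-r)^{2-p}}{2r(1-p)(2-p)}.
\]
Using $C_p \geq c_2(p) = \frac{2^{1-p/2}}{(1-p)(2-p)}$ (Lemma \ref{lm:c2-cinf}) and $\Phi_p(r^2) = 2\cdot 2^{-p/2}-(3-r^2)^{-p/2}$, the target rearranges, after clearing denominators, into exactly $h(p,r) \leq 2r$, which is the content of Lemma \ref{lm:xp}. For $r > 1$, the identity $\E|U_1 + rU_2|^{-p} = r^{-p}\E|U_1 + (1/r)U_2|^{-p}$ (swap the two i.i.d.\ variables in the closed form) reduces to the $y = 1/r \leq 1$ case, and the inequality $\Phi_p \leq \phi_p$ then gives $r^{-p}C_p\Phi_p(1/r^2) \leq C_p(1+r^2)^{-p/2} = C_p\Phi_p(r^2)$.

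For the inductive step from $n-1$ to $n$, I split on the regime of $\sum_{k=2}^n r_k^2$. If $\sum_{k=2}^n r_k^2 \geq 1$, then $\Phi_p = \phi_p$ at the argument and the target is the plain Khinchin-type bound for weights $(1, r_2, \dots, r_n)$. When the smallness condition of Theorem \ref{thm:all-small} holds, Step I applies directly; otherwise a single $r_j$ with $j \geq 2$ dominates, and the rescaling $\E|U_1 + r_jU_j + \cdots|^{-p} = r_j^{-p}\E|(1/r_j)U_1 + U_j + \cdots|^{-p}$ followed by relabelling brings the leading weight to be the largest, so the transformed sum falls into the reflected regime below, after which an elementary monotonicity of $\Phi_p$ closes the loop.

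The main obstacle is the regime $\sum_{k=2}^n r_k^2 < 1$, where every $r_k < 1$ and the argument of $\Phi_p$ sits in its reflected range $[0,1]$. Peeling off $U_n$ gives
\[
\E|X + r_n U_n|^{-p} = \frac{1}{2r_n(1-p)}\E\bigl[|X+r_n|^{1-p}\sgn(X+r_n) - |X-r_n|^{1-p}\sgn(X-r_n)\bigr],
\]
with $X = U_1 + \sum_{k=2}^{n-1}r_k U_k$, and the inductive hypothesis controls $\E|X|^{-p}$ by $C_p\Phi_p(\sum_{k=2}^{n-1}r_k^2)$. The difficulty is that this averaged moment has a \emph{smoothed} behaviour not easily comparable to $|X|^{-p}$ pointwise (it is bounded where $|X|^{-p}$ is singular, yet captures the same asymptotic for large $|X|$), so no direct decomposition reduces cleanly to the inductive hypothesis. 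The resolution exploits the tailored construction of $\Phi_p$ on $[0,1]$ as $2\phi_p(1) - \phi_p(2-x)$: the common tangent with $\phi_p$ at $x = 1$ is tuned so that the base case inequality from Lemma \ref{lm:xp} propagates through the induction without loss, matching the critical boundary $x = 1$ of the reflected regime with the extremizing equality in the base case.
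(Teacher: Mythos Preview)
Your base case and your handling of the regime $\sum_{k=2}^n r_k^2 \geq 1$ are essentially the paper's arguments (Lemma~\ref{lm:base} plus the case split into Theorem~\ref{thm:all-small} or a homogeneity rescaling), so those parts are fine.

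The genuine gap is in the regime $\sum_{k=2}^n r_k^2 < 1$. Your ``peeling off $U_n$'' identity is correct, but it expresses the conditional moment in terms of $|X\pm r_n|^{1-p}$, which is \emph{not} a quantity the inductive hypothesis controls; you yourself note there is no pointwise comparison to $|X|^{-p}$, and your final paragraph gives only motivation for why $\Phi_p$ is built the way it is, not a mechanism for passing from $n$ to $n-1$. As the proof overview in Section~3 warns, after reducing to scalar uniforms this kind of one-variable averaging does not produce an algebraic identity suitable for induction.

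What the paper actually does in this case is retain the $\R^3$ structure precisely so that two terms can be \emph{merged} into one: using $(\xi_{n-1},\xi_n)\stackrel{d}{=}(\xi_{n-1},Q\xi_{n-1})$ for a random orthogonal $Q$, the last two summands become the single term $\scal{v_{n-1}+Q^\top v_n}{\xi_{n-1}}$, which is again of the form $\scal{w}{\xi}$. Conditionally on $Q$ one now has a sum of $n-1$ terms of the required shape, so the inductive hypothesis applies and gives the bound $C_p\Phi_p\bigl(x+2\scal{v_{n-1}}{Q^\top v_n}\bigr)$. Averaging over $Q$ (using that $\pm Q$ are equidistributed) and invoking the midpoint-concavity property of $\Phi_p$ on the relevant range (Lemma~\ref{lm:Phi-concave}) yields $C_p\Phi_p(x)$. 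Equivalently, in purely scalar language, the point you are missing is that $r_{n-1}U_{n-1}+r_nU_n$ is a \emph{scale mixture} of a single uniform: it has the same law as $RU$ with $U$ uniform on $[-1,1]$ and $R^2$ uniform on $[(r_{n-1}-r_n)^2,(r_{n-1}+r_n)^2]$, so one can condition on $R$, apply the hypothesis for $n-1$ summands, and then average using Lemma~\ref{lm:Phi-concave}. Either way, the missing ingredients are (i) a two-into-one merging step that preserves the form of the summands, and (ii) the concavity lemma for $\Phi_p$; your peeling-off-$U_n$ route provides neither.
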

Since $\scal{v_k}{\xi_k}$ has the same distribution as $\|v_k\|U_k$, \eqref{eq:ind} gives \eqref{eq:khin-unif}.

\begin{proof}[Proof of Theorem \ref{thm:one-large}]
We use induction on $n$. For $n=2$, we have the following lemma, the proof of which we defer for now.

\begin{lemma}\label{lm:base}
For every vector $v$ in $\R^3$, we have
\begin{equation}\label{eq:base}
\E|\scal{e_1}{\xi_1} + \scal{v}{\xi_2}|^{-p} \leq c_2(p)\Phi_p(\|v\|^2).
\end{equation}
\end{lemma}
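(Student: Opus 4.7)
The plan is to reduce \eqref{eq:base} to a one-dimensional moment inequality and then match it to Lemma~\ref{lm:xp}. By Archimedes' hat-box theorem (special to $S^{2}\subset\R^3$: the pushforward of the uniform measure on $S^2$ under orthogonal projection to any line is uniform on $[-1,1]$), $\scal{e_1}{\xi_1}$ is distributed as $U_1$ and, independently, $\scal{v}{\xi_2}$ is distributed as $a\,U_2$, where $a = \|v\|$ and $U_1, U_2$ are i.i.d.\ uniform on $[-1,1]$. A direct convolution computation for the piecewise-linear density of $U_1 + aU_2$, followed by integration of $|x|^{-p}$ against it, yields the explicit formula
\[
\E|U_1 + aU_2|^{-p} = \frac{(1+a)^{2-p} - |1-a|^{2-p}}{2a(1-p)(2-p)}, \qquad a > 0.
\]

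I would then split according to the two pieces defining $\Phi_p$. For $a\in(0,1]$, one has $\Phi_p(a^2) = 2\cdot 2^{-p/2} - (3-a^2)^{-p/2}$; plugging this and the explicit value of $c_2(p)$ from \eqref{eq:consts} into \eqref{eq:base}, clearing denominators and dividing by $2^{2-p}$, the inequality rearranges exactly to
\[
\left(\tfrac{1+a}{2}\right)^{2-p} - \left(\tfrac{1-a}{2}\right)^{2-p} + a\left(\tfrac{3-a^2}{2}\right)^{-p/2} \leq 2a,
\]
which is precisely the conclusion $h(p,a)\leq 2a$ of Lemma~\ref{lm:xp}. For $a \geq 1$, one has $\Phi_p(a^2) = \phi_p(a^2) = (1+a^2)^{-p/2}$; here I would use the scaling identity $\E|U_1 + aU_2|^{-p} = a^{-p}\,\E|U_1 + bU_2|^{-p}$ with $b = 1/a \in (0,1]$ (which follows from $U_1\stackrel{d}{=}-U_1$ together with pulling $a$ out of $aU_2 + U_1 \stackrel{d}{=} a(U_2 + (1/a)U_1)$) to rewrite the desired bound as $\E|U_1 + bU_2|^{-p} \leq c_2(p)\,\phi_p(b^2)$. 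Since $\Phi_p \leq \phi_p$ pointwise (by construction of $\Phi_p$ on $[0,1]$ as the reflection of $\phi_p\vert_{[1,2]}$ about the common tangent point at $x=1$), this bound is weaker than the inequality already obtained in the first regime with $a$ replaced by $b$, and hence follows from it.

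The main technical step is the algebraic bookkeeping in the $a\leq 1$ case: one must verify that the explicit moment formula together with the two-piece definition of $\Phi_p$ rearrange exactly into the three-term expression $h(p,a)$ appearing in Lemma~\ref{lm:xp}. Once this identification is made, Lemma~\ref{lm:xp} does all the heavy lifting, and the case $a \geq 1$ is essentially free via the scaling duality $a\leftrightarrow 1/a$ together with the pointwise bound $\Phi_p \leq \phi_p$.
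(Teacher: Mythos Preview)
Your proposal is correct and follows essentially the same approach as the paper: compute the explicit moment $\E|U_1+aU_2|^{-p}$, rearrange the case $a\le 1$ into the inequality $h(p,a)\le 2a$ of Lemma~\ref{lm:xp}, and reduce $a>1$ to $b=1/a\le 1$ via scaling together with $\Phi_p\le \phi_p$. The only cosmetic difference is that the paper phrases the $a>1$ reduction through rotational invariance of the $\xi_k$ rather than the equivalent $U_1\leftrightarrow U_2$ symmetry you use.
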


Let $n \geq 3$ and suppose \eqref{eq:ind} holds for every sequence of $n-1$ vectors in $\R^3$. Let $v_2, \dots, v_n \in \R^3$ and $x = \|v_2\|^2+\dots+\|v_n\|^2$. We want to show \eqref{eq:ind}. There are 3 cases.

\emph{Case (a):} $\|v_k\| > 1$ for some $2 \leq k \leq n$. Then $x > 1$, so \eqref{eq:ind} coincides with
\begin{equation}\label{eq:K''}
\E\left|\sum_{k=1}^n \scal{v_k}{\xi_k}\right|^{-p} \leq C_p(\|v_1\|^2+\|v_2\|^2+\ldots+\|v_n\|^2)^{-p/2},
\end{equation}
where $v_1 = e_1$.
Let $v_1^\ast,\ldots,v_n^\ast$ be a rearrangement of $v_1,\ldots,v_n$ such that $\|v_k^\ast\|\geq \|v_{k+1}^\ast\|$ for every $k=1,\ldots,n-1$ and let $v'_k = \frac{v_k^\ast}{\|v_1^\ast\|}$ for every $k=1,\ldots,n$, so that $\|v'_1\|=1$ and $\|v'_k\|\leq 1$ for $k=2,\ldots,n$. Then due to the homogeneity of \eqref{eq:K''} and the fact that $\scal{v'_1}{\xi_1}$ has the same distribution as $\scal{e_1}{\xi_1}$, it is enough to prove
\[
\E\left|\langle e_1,\xi_1\rangle + \sum_{k=2}^n \langle v'_k\xi_k\rangle\right|^p \ls C_p\Phi_p(\|v'_2\|^2+\dots+\|v'_n\|^2),
\]
which is handled by the next cases.

\emph{Case (b):} $\|v_k\| \leq 1$ for every $2 \leq k \leq n$ and $x \geq 1$. Then again \eqref{eq:ind} coincides with the homogeneous estimate \eqref{eq:khin-unif}. Moreover, we have that
\[
\max_{1\leq k\leq n} \|v_k\|^2=1\leq \frac{1}{2}(1+x) = \frac{1}{2}\sum_{k=1}^n \|v_k\|^2,
\]
so this case reduces to Theorem \ref{thm:all-small} where all the $\|v_k\|$ are \emph{small}.

\emph{Case (c):} $\|v_k\| \leq 1$ for every $2 \leq k \leq n$ and $x < 1$. Since the pair $(\xi_{n-1},\xi_n)$ has the same distribution as $(\xi_{n-1},Q\xi_{n-1})$ for a random orthogonal matrix $Q$ independent of all the $\xi_k$, we have,
\begin{align*}
&\E\left|\scal{e_1}{\xi_1} + \sum_{k=2}^n \scal{v_k}{\xi_k}\right|^p = \E\left|\scal{e_1}{\xi_1} + \scal{v_2}{\xi_2} + \dots + \scal{v_{n-1}}{\xi_{n-1}}+ \scal{Q^\top v_n}{\xi_{n-1}}\right|^p \\
&= \E_{Q}\left[\E_{(\xi_k)_{k=2}^{n-1}}\left|\scal{e_1}{\xi_1} +  \scal{v_2}{\xi_2} + \dots + \scal{v_{n-2}}{\xi_{n-2}} + \scal{v_{n-1}+Q^\top v_n}{\xi_{n-1}} \right|^p\right].
\end{align*}
By the inductive hypothesis applied to the sequence $(v_2, \dots, v_{n-2}, v_{n-1}+Q^\top v_n)$ (conditioned on the value of $Q$), we get
\[
\E\left|\scal{e_1}{\xi_1} + \sum_{k=2}^n \scal{v_k}{\xi_k}\right|^p \leq C_p\E_{Q}\Phi_p(\|v_2\|^2 + \dots + \|v_{n-2}\|^2 + \|v_{n-1}+Q^\top v_n\|^2).
\]
Note that
\[
\|v_2\|^2 + \dots + \|v_{n-2}\|^2 + \|v_{n-1}+Q^\top v_n\|^2 = x + 2\scal{v_{n-1}}{Q^\top v_n},
\]
thus, by the symmetry of $Q$,
\begin{align*}
&\E_{Q}\Phi_p(\|v_2\|^2 + \dots + \|v_{n-2}\|^2 + \|v_{n-1}+Q^\top v_n\|^2) \\
&= \E_{Q}\frac{\Phi_p\left(x + 2\scal{v_{n-1}}{Q^\top v_n}\right) + \Phi_p\left(x - 2\scal{v_{n-1}}{Q^\top v_n}\right)}{2}.
\end{align*}
We shall now need a lemma about concavity of $\Phi_p$, the proof of which we also defer.
\begin{lemma}\label{lm:Phi-concave}
Let $p \in (0,1)$. For every $a, b \geq 0$ with $\frac{a+b}{2} \leq 1$, we have
\[
\frac{\Phi_p(a)+\Phi_p(b)}{2} \leq \Phi_p\left(\frac{a+b}{2}\right).
\]
\end{lemma}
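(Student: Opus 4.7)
The plan is to parametrize $(a,b)$ by its midpoint $m = \tfrac{a+b}{2} \leq 1$ and half-distance $h = \tfrac{|b-a|}{2} \in [0,m]$, reducing the inequality to $F(h) \geq 0$ where
\[
F(h) := 2\Phi_p(m) - \Phi_p(m-h) - \Phi_p(m+h).
\]
I will use two elementary properties of $\phi_p(x) = (1+x)^{-p/2}$: on $(-1,\infty)$ it is strictly decreasing and strictly convex, since $\phi_p''(x) = \tfrac{p}{2}\bigl(\tfrac{p}{2}+1\bigr)(1+x)^{-p/2-2}>0$, so in particular $\phi_p'$ is strictly increasing. From the definition $\Phi_p(x) = 2\phi_p(1) - \phi_p(2-x)$ for $x \in [0,1]$, one differentiates to obtain $\Phi_p''(x) = -\phi_p''(2-x) < 0$, so $\Phi_p$ is strictly concave on $[0,1]$; the reflection construction also forces $\Phi_p$ to be $C^1$ at $x=1$, with $\Phi_p'(1\pm) = \phi_p'(1)$.

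The argument then splits into two cases. If $m+h \leq 1$, both $a$ and $b$ lie in $[0,1]$ and the concavity of $\Phi_p$ on $[0,1]$ directly yields $F(h) \geq 0$. The second case requires $m > 1/2$ and $h \in (1-m, m]$; then $a = m-h \in [0,1)$ while $b = m+h \in (1,2m] \subset (1,2]$. Substituting the reflected formula for $\Phi_p(m-h)$ and $\Phi_p(m)$, and the direct formula $\Phi_p(m+h) = \phi_p(m+h)$, gives
\[
F(h) = 2\phi_p(1) - 2\phi_p(2-m) + \phi_p(2-m+h) - \phi_p(m+h),
\]
so $F'(h) = \phi_p'(2-m+h) - \phi_p'(m+h)$. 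Since $m \leq 1$ implies $2-m+h \geq m+h$ and $\phi_p'$ is increasing, $F'(h) \geq 0$; hence $F$ is nondecreasing on $[1-m, m]$, and it remains only to verify $F(1-m) \geq 0$.

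At the junction $h = 1-m$ one has $m+h = 1$, so the value matches the first case by continuity; explicitly,
\[
F(1-m) = \phi_p(1) - 2\phi_p(2-m) + \phi_p(3-2m),
\]
and since $2-m = \tfrac{1 + (3-2m)}{2}$, strict convexity of $\phi_p$ yields $2\phi_p(2-m) \leq \phi_p(1) + \phi_p(3-2m)$, i.e.\ $F(1-m) \geq 0$. Combining the two cases gives the result. The main thing to watch is the bookkeeping at the junction $x=1$, where $\Phi_p$ switches between its reflected and unreflected branches; the $C^1$-gluing built into the definition of $\Phi_p$ is precisely what makes the two cases patch together smoothly, and the one-sided constraint $m \leq 1$ is exactly what ensures $2-m+h \geq m+h$ in the monotonicity step.
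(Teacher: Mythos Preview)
Your proof is correct and follows essentially the same strategy as the paper's: handle $a,b \in [0,1]$ by concavity of $\Phi_p$ on $[0,1]$, and in the mixed case use a monotonicity argument to reduce to a boundary point. The only variation is in the parametrization and endpoint: the paper holds $b$ fixed, varies $a$, and reduces to the constraint boundary $a+b=2$ (where equality holds directly from the reflection definition of $\Phi_p$), whereas you fix the midpoint $m$, vary the gap $h$, and reduce to the case boundary $b=1$, verifying nonnegativity there via convexity of $\phi_p$.
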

This lemma applied to $a = x + 2\scal{v_{n-1}}{Q^\top v_n}$ and $b = x - 2\scal{v_{n-1}}{Q^\top v_n}$ (which satisfy $a, b \geq 0$ and $\frac{a+b}{2} = x < 1$) finishes the proof of the inductive step.

It remains to show the lemmas we have used.
\end{proof}

\begin{proof}[Proof of Lemma \ref{lm:base}]
First note that if $\|v\|>1$ then, due to rotational invariance,
\begin{align*}
\mathbb{E}|\langle e_1,\xi_1\rangle + \langle v,\xi_2\rangle|^{-p} &= \|v\|^{-p} \mathbb{E}\left|\left\langle \frac{e_1}{\|v\|},\xi_1\right\rangle + \left\langle \frac{v}{\|v\|},\xi_2\right\rangle\right|^{-p}\\
                                                                   &= \|v\|^{-p} \mathbb{E}|\langle v',\xi_1\rangle + \langle e_1,\xi_2\rangle|^{-p}
\end{align*}
for any $v'\in\mathbb{R}^3$ such that $\|v'\| = \frac{1}{\|v\|}<1$, while at the same time
\[
\Phi_p(\|v\|^2) = \phi_p(\|v\|^2) = \|v\|^{-p}\phi_p(\|v'\|^2).
\]
This shows that the desired inequality is then equivalent to
\[
\mathbb{E}|\langle e_1,\xi_1\rangle + \langle v',\xi_2\rangle|^{-p}\ls c_2(p)\phi_p(\|v'\|^2), \quad \|v'\|\leq 1.
\]
Since $\Phi_p(x) \leq \phi_p(x)$ for $x \in [0,1]$, it is sufficient to prove the lemma in the case $\|v\| \leq 1$.

Fix $v \in \R^3$ with $x=\|v\|\ls 1$. To compute explicitly the left hand side of \eqref{eq:base}, recall that for any $w\in\mathbb{R}^3$, $\langle w,\xi\rangle$ has the same distribution as $\|w\|U$ where $\xi$ and $U$ are uniformly distributed on $S^2$ and $[-1,1]$, respectively. Then, we have that
\begin{align*}
\mathbb{E}|\langle e_1,\xi_1\rangle + \langle v,\xi_2\rangle|^{-p} = \mathbb{E}|U_1+xU_2|^{-p} &= \frac{1}{4}\int_{-1}^1\int_{-1}^1 |u_1+xu_2|^{-p} \dd u_1\dd u_2 \\
&=\frac{(1+x)^{2-p}-(1-x)^{2-p}}{2(1-p)(2-p)x}.
\end{align*}
Recalling the definition of $c_2(p)$ and $\Phi_p$ on $[0,1]$, we thus get that \eqref{eq:base} becomes
\[
\frac{(1+x)^{2-p}-(1-x)^{2-p}}{2x} \ls 2^{1-p/2}(2^{1-p/2}-(3-x^2)^{-p/2})
\]
for every $0<x\ls 1$. Note that we can write this as
\[
\left(\frac{1+x}{2}\right)^{2-p}-\left(\frac{1-x}{2}\right)^{2-p} + x\left(\frac{3-x^2}{2}\right)^{-p/2} \ls 2x,
\]
so Lemma \ref{lm:xp} finishes the proof.
\end{proof}

\begin{proof}[Proof of Lemma \ref{lm:Phi-concave}]
We can assume without loss of generality that $a<b$. If $b\leq 1$, the desired inequality follows from the concavity of $\Phi_p$ on $[0,1]$. So, assume that $b>1$. Then using the facts $a<b$ and $\frac{a+b}{2}\ls 1$, we can write
\[
\frac{\partial}{\partial a}\Phi_p\left(\frac{a+b}{2} \right) = \frac{1}{2}\frac{d\Phi_p}{dx}\Big|_{x=\frac{a+b}{2}} \ls \frac{1}{2} \frac{d\Phi_p}{dx}\Big|_{x=a} = \frac{\partial}{\partial a} \frac{\Phi_p(a)+\Phi_p(b)}{2},
\]
using the fact that the derivative of $\Phi_p'$ is decreasing on $[0,1]$. This implies that
\[
\Phi_p\left(\frac{a+b}{2} \right) - \frac{\Phi_p(a)+\Phi_p(b)}{2}
\]
is a decreasing function of $a$, so to prove the desired inequality, it suffices to show that the latter is nonnegative for the maximum value of $a$, that is $a = a_0=2-b$. Since $b>1$, $a_0 < 1$ and by the definition of $\Phi_p$, $\Phi_p(a_0) = 2\phi_p(1) - \phi_p(2-(2-b)) = 2\phi_p(1) - \phi_p(b)$ and $\Phi_p(b) = \phi_p(b)$, we get
\[
\frac{\Phi_p(a_0)+\Phi_p(b)}{2} = \frac{2\phi_p(1)-\phi_p(b)+\phi_p(b)}{2} = \phi_p(1) = \Phi_p\left(\frac{a_0+b}{2}\right),
\]
that is, the desired inequality is in fact an equality in this case.
\end{proof}

\begin{remark}\label{rem:proj}
Let $p \in (0,1)$. Let $X$ be a rotationally invariant random vector in $\R^3$. For every nonzero vector $y$ in $\R^3$, observing that $\|X+y\|^2$ has the same distribution as $\|X\|^2 + \|y\|^2 + 2\|X\|\|y\|U$, where $U$ is uniform on $[-1,1]$, independent of $X$, we have
\[
\E\|X+y\|^{-p} = \E\frac{\big(\|X\|+\|y\|\big)^{2-p}-\big|\|X\|-\|y\|\big|^{2-p}}{2(2-p)\|X\|\|y\|}.
\]
In particular, by the concavity of $t \mapsto t^{1-p}$, 
\[
\E\|X+y\|^{-p} \leq \E\|X\|^{-p}.
\]
This combined with independence gives
\[
\E\left\|\sum_{k=1}^n a_k\xi_k\right\|^{-p} \leq \min_{1 \leq k \leq n} |a_k|^{-p}.
\]
For $p=1$, this immediately gives \eqref{eq:L-1-1} in the case of a large weight, $\max_{k \leq n} |a_k| > \frac{1}{\sqrt{2}}\left(\sum_{k=1}^n a_k^2\right)^{1/2}$ and Theorem \ref{thm:one-large} is not needed (this corresponds to the simple projection argument from \cite{Ball} handling this case). For $p < 1$, this argument yields the nonsharp constant $2^{p/2}$ instead of $(1-p)C_p$.
\end{remark}

\subsection{Proof of Corollary \ref{cor:khin-sph}}

Let $G=(G_1,G_2,G_3)$ be a standard Gaussian random vector in $\R^3$ (mean $0$, covariance~$I$), independent of the sequence $(\xi_k)_{k=1}^n$. Then for every vector $x$ in $\R^3$, since $\scal{x}{G}$ has the same distribution as $\|x\|G_1$, we have $\|x\|^{-p} = \alpha_p\E|\scal{x}{G}|^{-p}$ with $\alpha_p = (\E|G_1|^{-p})^{-1}$. Therefore,
\begin{equation}\label{eq:emb}
\left\|\sum_{k=1}^n A_k\xi_k\right\|^{-p} = \alpha_p\E_G\left|\sum_{k=1}^n\scal{\xi_k}{A_k^\top G}\right|^{-p}.
\end{equation}
Using this and inequality \eqref{eq:khin-unif}, we obtain
\begin{align*}
\E\left\|\sum_{k=1}^n A_k\xi_k\right\|^{-p} &= \alpha_p\E_G\E_{\xi}\left|\sum_{k=1}^n\scal{\xi_k}{A_k^\top G}\right|^{-p} \leq C_p\alpha_p\E_G\left(\sum_{k=1}^n \|A_k^\top G\|^2\right)^{-p/2}.
\end{align*}
Rewriting the sum of squares using the second moment, applying Minkowski's inequality (with the negative exponent $-\frac{2}{p}$) and using \eqref{eq:emb} again, we get
\begin{align*}
C_p\alpha_p\E_G\left(\sum_{k=1}^n \|A_k^\top G\|^2\right)^{-p/2} &= C_p\alpha_p\E_G\left(3\E_\xi \left|\sum_{k=1}^n \scal{\xi_k}{A_k^\top G}\right|^2\right)^{-p/2} \\
&\leq 3^{-p/2}C_p\alpha_p\left(\E_\xi\left(\E_G\left|\sum_{k=1}^n \scal{\xi_k}{A_k^\top G}\right|^{-p}\right)^{-2/p}\right)^{-p/2}\\
&= 3^{-p/2}C_p \left(\E_\xi\left\|\sum_{k=1}^n A_k\xi_k\right\|^2\right)^{-p/2}.
\end{align*}
Finally, $3\E_\xi\left\|\sum_{k=1}^n A_k\xi_k\right\|^2 = \sum_{k=1}^n\|A_k\|_{HS}^2$.
\hfill$\square$

\section{Conclusion}

Continuing a long line of work  and particularly addressing some questions raised in \cite{BC}, we have established a sharp Khinchin-type $L_{-p}-L_2$ moment comparison inequality when $p \in (0,1)$ for weighted sums of independent random variables uniform on $[-1,1]$, equivalently uniform vectors on the unit sphere $S^2$ in $\R^3$. In this case, this provides a sharp version of the very general results from \cite{GF}.

We have not tried to optimise various technical numerical estimates which would certainly allow to extend our results to $p \in (0,p_1)$, $p_1 =1.38$ (the negative moments of order $-p$ for $S^2$--uniform vectors exist for all $p < 2$). The arguments seem robust enough to handle cases of $S^r$--uniform vectors for other values of $r$ (most notably the case of $r = 1$ corresponding to Steinhaus random variables as well as the case of $r=3$ which would provide extensions of the polydisc slicing inequality of Oleszkiewicz and Pe\l czy\'nski from \cite{OP}, just as our result extends Ball's cube slicing inequality from \cite{Ball}). Moreover, the question of a sharp $L_p-L_2$, $p \in (0,1)$, moment comparison for $\sum_{k=1}^n a_kU_k$ remains open with a natural conjecture that $\inf_n \inf_{a \in \R^n} \frac{\E|\sum_{k=1}^n a_kU_k|^p}{\left(\sum_{k=1}^n a_k^2\right)^{p/2}} = \lim_{n\to\infty} \E\left|\sum_{k=1}^nn^{-1/2}U_k\right|^p$ (see Question 5 and Proposition 15 in \cite{ENT}). All this is the topic of ongoing and future work.


\begin{thebibliography}{9}



\bibitem{AH}
Averkamp, R., Houdr\'e, C., Wavelet thresholding for non-necessarily Gaussian noise: Idealism. \emph{Ann. Statist.} 31 (2003), 110--151.


\bibitem{BC}
Baernstein, A., II, Culverhouse, R., Majorization of sequences, sharp vector Khinchin inequalities, and bisubharmonic functions. \emph{Studia Math.} 152 (2002), no. 3, 231--248.

\bibitem{Ball}
Ball, K.,
Cube slicing in $R^n$.
\emph{Proc. Amer. Math. Soc.} 97 (1986), no. 3, 465--473.



\bibitem{Ball2}
Ball, K., Volumes of sections of cubes and related problems. \emph{Geometric aspects of functional analysis (1987--88)}, 251--260, Lecture Notes in Math., 1376, \emph{Springer, Berlin}, 1989.

\bibitem{Ball3}
Ball, K.,
Mahler's conjecture and wavelets.
\emph{Discrete Comput. Geom.} 13 (1995), no. 3-4, 271--277.


\bibitem{BN}
Barthe, F.; Naor, A. 
Hyperplane projections of the unit ball of $l^n_p$. 
\emph{Discrete Comput. Geom.} 27 (2002), no. 2, 215--226.



\bibitem{Eat}
Eaton, M. L.,
A note on symmetric Bernoulli random variables.
\emph{Ann. Math. Statist.} 41 (1970), 1223–-1226.



\bibitem{EM}
Edmunds, D., Melkonian, H.,
Behaviour of $L_q$ norms of the $\text{sinc}_p$ function.
\emph{Proc. Amer. Math. Soc.} 147 (2019), no. 1, 229--238.



\bibitem{ENT1}
Eskenazis, A., Nayar, P., Tkocz, T.,
Gaussian mixtures: entropy and geometric inequalities, \emph{Ann. of Prob.} 46(5) 2018, 2908--2945.


\bibitem{ENT}
Eskenazis, A., Nayar, P., Tkocz, T.,
Sharp comparison of moments and the log-concave moment problem.
\emph{Adv. Math.} 334 (2018), 389--416.



\bibitem{FHJSZ}
Figiel, T., Hitczenko, P., Johnson, W. B., Schechtman, G., Zinn, J.,
Extremal properties of Rademacher functions with applications to the Khintchine and Rosenthal inequalities.
\emph{Trans. Amer. Math. Soc.} 349 (1997), no. 3, 997--1027.


\bibitem{GF}
Gorin, A., Favorov, Yu.,
Generalizations of the Khinchin inequality. (Russian)
\emph{Teor. Veroyatnost. i Primenen.} 35 (1990), no. 4, 762--767; translation in
\emph{Theory Probab. Appl.} 35 (1990), no. 4, 766--771 (1991).



\bibitem{Haa}
Haagerup, U.,
The best constants in the Khintchine inequality.
\emph{Studia Math.} 70 (1981), no. 3, 231--283.

\bibitem{Ha}
Hadwiger, H.
Gitterperiodische Punktmengen und Isoperimetrie.
\emph{Monatsh. Math.} 76 (1972), 410--418.


\bibitem{HT}
Havrilla, A., Tkocz, T., Sharp Khinchin-type inequalities for symmetric discrete uniform random variables, preprint, arXiv:1912.13345.



\bibitem{H}
Hensley, D.,
Slicing the cube in $R^n$ and probability (bounds for the measure of a central cube slice in $R^n$ by probability methods).
\emph{Proc. Amer. Math. Soc.} 73 (1979), no. 1, 95--100.


\bibitem{KOS}
Kerman, R., Olhava, R., Spektor, S.,
An asymptotically sharp form of Ball's integral inequality.
\emph{Proc. Amer. Math. Soc.} 143 (2015), no. 9, 3839--3846.


\bibitem{Kh}
Khintchine, A.,
\"Uber dyadische Br\"uche.
\emph{Math. Z.} 18 (1923), no. 1, 109--116.


\bibitem{Kom}
Komorowski, R.,
On the best possible constants in the Khintchine inequality for $p\geq3$.
\emph{Bull. London Math. Soc.} 20 (1988), no. 1, 73--75.


\bibitem{K}
K\"onig, H.,
On the best constants in the Khintchine inequality for Steinhaus variables.
\emph{Israel J. Math.} 203 (2014), no. 1, 23--57.


\bibitem{KKol}
K\"onig, H., Koldobsky, A.,
On the maximal measure of sections of the $n$-cube. \emph{Geometric analysis, mathematical relativity, and nonlinear partial differential equations}, 123--155,
Contemp. Math., 599, \emph{Amer. Math. Soc., Providence, RI}, 2013.


\bibitem{KKol2}
König, H., Koldobsky, A.,
On the maximal perimeter of sections of the cube.
\emph{Adv. Math.} 346 (2019), 773--804.


\bibitem{KK}
K\"onig, H., Kwapie\'n, S.,
Best Khintchine type inequalities for sums of independent, rotationally invariant random vectors.
\emph{Positivity} 5 (2001), no. 2, 115--152.



\bibitem{KR}
K\"onig, H., Rudelson, M.,
On the volume of non-central sections of a cube.
\emph{Adv. Math.} 360 (2020), 106929, 30 pp.




\bibitem{KLO}
Kwapie\'n, S., Lata\l a, R., Oleszkiewicz, K.,
Comparison of moments of sums of independent random variables and differential inequalities.
\emph{J. Funct. Anal.} 136 (1996), no. 1, 258--268.



\bibitem{LO-best}
Lata\l a, R., Oleszkiewicz, K.,
On the best constant in the Khinchin-Kahane inequality.
\emph{Studia Math.} 109 (1994), no. 1, 101--104.




\bibitem{LO}
Lata\l a, R., Oleszkiewicz, K.,
A note on sums of independent uniformly distributed random variables.
\emph{Colloq. Math.} 68 (1995), no. 2, 197--206.



\bibitem{LPP}
Livshyts, G., Paouris, G., Pivovarov, P.,
On sharp bounds for marginal densities of product measures. 
\emph{Israel J. Math.} 216 (2016), no. 2, 877--889.


\bibitem{Mo}
Mordhorst, O.,
The optimal constants in Khintchine's inequality for the case $2<p<3$.
\emph{Colloq. Math.} 147 (2017), no. 2, 203--216.



\bibitem{NO}
Nayar, P., Oleszkiewicz, K.,
Khinchine type inequalities with optimal constants via ultra log-concavity.
\emph{Positivity} 16 (2012), no. 2, 359--371.



\bibitem{NP}
Nazarov, F., Podkorytov, A.,
Ball, Haagerup, and distribution functions. Complex analysis, operators, and related topics, 247--267,
\emph{Oper. Theory Adv. Appl.}, 113, Birkh\"auser, Basel, 2000.



\bibitem{OP}
Oleszkiewicz, K., Pe\l czy\'nski, A.,
Polydisc slicing in $C^n$.
\emph{Studia Math.} 142 (2000), no. 3, 281--294.


\bibitem{Pe}
Pe\v{s}kir, G.,
Best constants in Kahane-Khintchine inequalities for complex Steinhaus functions.
\emph{Proc. Amer. Math. Soc.} 123 (1995), no. 10, 3101--3111.


\bibitem{Sa}
Sawa, J.,
The best constant in the Khintchine inequality for complex Steinhaus variables, the case $p=1$. \emph{Studia Math.} 81 (1985), no. 1, 107--126.


\bibitem{Sz}
Szarek, S.,
On the best constant in the Khintchine inequality.
\emph{Stud. Math.} 58, 197--208 (1976).





\end{thebibliography}
\end{document}